\renewenvironment{proof}[1][\proofname] {\par\pushQED{\qed}\normalfont\topsep6\p@\@plus6\p@\relax\trivlist\item[\hskip\labelsep\bfseries#1\@addpunct{.}]\ignorespaces}{\popQED\endtrivlist\@endpefalse}
\newtheorem{theorem}{\bf Theorem}[section]
\newtheorem{lemma}[theorem]{\bf Lemma}
\newtheorem{corollary}[theorem]{\bf Corollary}
\newtheorem{claim}[theorem]{\bf Claim}
\newtheorem{question}[theorem]{\bf Question}
\newtheorem{problem}[theorem]{\bf Problem}
\theoremstyle{plain}
\newtheorem{definition}[theorem]{\bf Definition}
\def\eps{\varepsilon}
\def\ex{\mathrm{ex}}
\def\Exx{\mathrm{Ex}}
\author{
Barnab\'as Janzer\thanks{Department of Mathematics, ETH Z\"urich,
Switzerland. Research supported by SNSF grant 200021\_228014
and by a fellowship at Magdalen College, University of Oxford. Email: \mbox{\textbf{barnabas.janzer@math.ethz.ch}}.}
\and
Oliver Janzer\thanks{Department of Pure Mathematics and Mathematical Statistics, University of Cambridge, United Kingdom. Research supported by a fellowship at Trinity College. Email: \textbf{oj224@cam.ac.uk}.}
\and
Abhishek Methuku\thanks{Department of Mathematics, University of Illinois at Urbana–Champaign, Urbana, IL, USA.
Email: \mbox{\textbf{
abhishekmethuku@gmail.com}}} 
\and
G\'abor Tardos\thanks{HUN-REN, Alfr\'ed R\'enyi Institute of Mathematics, Budapest, Hungary. Research supported by the National Research, Development and Innovation Office projects K-132696 and SNN-135643 and by the ERC Advanced Grants “ERMiD” and “GeoScape.” Email: \mbox{\textbf{tardos@renyi.hu}}}
}
\date{\vspace{-21pt}}
\title{Tight bounds for intersection-reverse sequences, edge-ordered graphs and applications}
\begin{document}

\maketitle

\begin{abstract}
    In 2006, Marcus and Tardos proved that if $A^1,\dots,A^n$ are cyclic orders on some subsets of a set of $n$ symbols such that the common elements of any two distinct orders $A^i$ and $A^j$ appear in reversed cyclic order in $A^i$ and $A^j$, then $\sum_{i} |A^i|=O(n^{3/2}\log n)$. This result is tight up to the logarithmic factor and has since become an important tool in Discrete Geometry. In this paper we improve this to the optimal bound $O(n^{3/2})$. In fact, we prove the following more general result.
    
    We show that if $A^1,\dots,A^n$ are linear orders on some subsets of a set of $n$ symbols such that no three symbols appear in the same order in any two distinct linear orders, then $\sum_{i} |A^i|=O(n^{3/2})$. Using this result, we resolve several open problems in Discrete Geometry and Extremal Graph Theory as follows. 
    \begin{enumerate}[label=(\roman*)]

        \item We prove that every $n$-vertex topological graph that does not contain a self-crossing four-cycle has $O(n^{3/2})$ edges. This resolves a problem of Marcus and Tardos from 2006.

        \item We show that $n$ pseudo-circles in the plane can be cut into $O(n^{3/2})$ pseudo-segments, which, in turn, implies new bounds on point-circle incidences and on other geometric problems. This goes back to a problem of Tamaki and Tokuyama from 1998 and improves several results in the area.

        \item We also prove that the edge-ordered Tur\'an number of the four-cycle $C_4^{1243}$ is $\Theta(n^{3/2})$. This gives the first example of an edge-ordered graph whose Tur\'an number is known to be $\Theta(n^{\alpha})$ for some $1<\alpha<2$, and answers a question of Gerbner, Methuku, Nagy, P\'alv\"olgyi, Tardos and Vizer. 
    \end{enumerate}

    Using different methods, we determine the largest possible extremal number that an edge-ordered forest of order chromatic number two can have. Kucheriya and Tardos showed that every such graph has extremal number at most $n2^{O(\sqrt{\log n})}$, and conjectured that this can be improved to $n(\log n)^{O(1)}$. We disprove their conjecture in a strong form by showing that for every $C>0$, there exists an edge-ordered tree of order chromatic number two whose extremal number is $\Omega(n 2^{C\sqrt{\log n}})$.
\end{abstract}

\section{Introduction}

A major line of research at the intersection of Discrete Geometry and Graph Theory is the study of classical problems of the latter field for graphs that have some geometric or algebraic structure. Recent highlights of this subject include a semi-algebraic version of Zarankiewicz's problem \cite{FPSSZ17}, which has applications to point-variety incidence bounds, and the resolution of the Erd\H os--Hajnal conjecture for string graphs \cite{Tom23}.

A topological graph is a representation of a graph in the plane such that the vertices of the graph are distinct points on the plane and the edges correspond to Jordan arcs joining the corresponding pairs of points. We assume that no edge passes through a vertex other than its endpoints and every two edges have a finite number of common interior points and they properly cross at each of these points. A geometric graph is a topological graph in which the edges are represented by straight line segments.

In the '60s, Avital and Hanani \cite{AH66} as well as Erd\H os and Perles (see~\cite{Pach99}) initiated, and later Kupitz \cite{Kup79} and many others continued the systematic study of extremal problems for geometric and topological graphs. In particular, they posed the following general question, which is a geometric analogue of the classical Tur\'an problem. Given a collection of forbidden geometric configurations, what is the maximum number of edges that an $n$-vertex topological graph can have without containing any of the forbidden configurations? For example, a famous problem in this subject asks for the maximum number of edges that a topological graph (or a geometric graph) on $n$ vertices can have without containing $k$ pairwise crossing edges. The conjectured answer is $O_k(n)$, but this is only known for $k=2$ (which is the trivial case corresponding to planar graphs), $k=3$ (see \cite{AAPPS97}, and for tighter bounds, see \cite{AT07}) and $k=4$ (see \cite{Ack09}). For $k>4$, the best known bound is $O_k(n(\log n)^{2\lceil\log k\rceil-4})$ for topological graphs in general~\cite{FPS24} and $O_k(n\log n)$ for geometric or $x$-monotone topological graphs (see \cite{Valtr98}). For surveys on Geometric Graph Theory, see e.g., \cite{Pach99,Pach13}.

Another well-studied instance of the general problem above is the following question, posed by Pach, Pinchasi, Tardos and T\'oth \cite{PPTT04} in 2004.

\begin{problem}[Pach--Pinchasi--Tardos--T\'oth \cite{PPTT04}] \label{prob:crossing-free}
    Let $K$ be a fixed abstract graph. What is the maximum number $\ex_{\textnormal{cr}}(n,K)$ of edges that a geometric graph on $n$ vertices can have if it contains no self-intersecting copy of $K$?
\end{problem}

Trivially, we have $\ex_{\textrm{cr}}(n,K)\geq \ex(n,K)$, and in particular, the answer is quadratic when $K$ is not bipartite. Moreover, if $K$ is not planar, then any copy of $K$ in a geometric (or topological) graph is self-intersecting, so $\ex_{\textrm{cr}}(n,K)=\ex(n,K)$. Hence, Problem \ref{prob:crossing-free} is the most interesting when $K$ is planar and bipartite. Pach, Pinchasi, Tardos and T\'oth \cite{PPTT04} showed that $\ex_{\textrm{cr}}(n,P_3)=\Theta(n\log n)$, where $P_3$ denotes the path with three edges. On the other hand, their best bound for the maximum number of edges in a \emph{topological} graph on $n$ vertices without a self-crossing $P_3$ is only $O(n^{3/2})$. Tardos \cite{Tar13} constructed, for each positive integer $k$, a geometric graph with a superlinear number of edges and no self-crossing path of length $k$.

One of the first results for topological graphs concerning Problem~\ref{prob:crossing-free} was due to Pinchasi and Radoi\v{c}i\'c~\cite{pinchasinumber}, who showed that the maximum number of edges in an $n$-vertex topological graph without a self-crossing four-cycle is $O(n^{8/5})$. Their proof relies on the study of intersection-reverse cyclic orders.

\begin{definition} \label{defn:int rev}
    Let $A^1,\dots,A^n$ be cyclic orders of subsets of a finite alphabet. We say that $A^i$ and $A^j$ are \emph{intersection-reverse} if their common elements appear in reverse order in the two cyclic orders. We say that $A^1,\dots,A^n$ are \emph{pairwise intersection-reverse} if for each $1\leq i<j\leq n$, $A^i$ and $A^j$ are intersection-reverse. 
\end{definition}

In what follows, for a cyclic or linear order $A$, we write $|A|$ for the number of elements ordered by $A$. For a vertex $v$ of a topological graph $G$, let $L_G(v)$ be the list of its neighbours, ordered cyclically counterclockwise according to the initial segment of the connecting edge. Pinchasi and Radoi\v{c}i\'c observed that if the lists $L_G(u)$ and $L_G(v)$ are not intersection-reverse for two distinct vertices $u$ and $v$ of the topological graph $G$, then $G$ contains a self-crossing four-cycle. Furthermore, they proved that if $A^1,\dots,A^n$ are pairwise intersection-reverse cyclic orders on subsets of size $d$ of a set of $n$ symbols, then $d=O(n^{3/5})$. Applying this result to the cyclic orders $L_G(u)$, $u\in V(G)$, they deduced that every $n$-vertex topological graph without a self-crossing four-cycle has $O(n^{8/5})$ edges.

The upper bound for the size of intersection-reverse cyclic orders (and hence also for the maximum number of edges in topological graphs without a self-crossing four-cycle) was greatly improved by Marcus and Tardos.

\begin{theorem}[Marcus--Tardos \cite{marcus2006intersection}] \label{thm:cyclic orders log n}
    Let $A^1,A^2,\dots,A^n$ be a collection of pairwise intersection-reverse cyclically ordered lists on subsets of a set of $n$ symbols. Then $\sum_{i=1}^n |A^i|=O(n^{3/2}\log n)$.
\end{theorem}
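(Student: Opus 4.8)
The plan is to prove Theorem~\ref{thm:cyclic orders log n} by a divide-and-conquer argument on the alphabet, with the $\log n$ factor arising from the recursion depth. First I would pass from cyclic orders to linear orders: cutting each cyclic order $A^i$ at an arbitrary point turns it into a linear order, and the intersection-reverse condition on cyclic orders guarantees that in the resulting linear orders, for any two indices $i\neq j$ and any three common symbols, the three symbols do not appear in the same linear order in $A^i$ and $A^j$ --- or at least this can be arranged after splitting each $A^i$ into two linear pieces, which only costs a constant factor. So it suffices to bound $\sum_i |A^i|$ when the $A^i$ are linear orders on subsets of an $n$-element alphabet with no three symbols appearing in the same relative order in two of them.

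The core of the argument would be the following dichotomy. Fix a threshold parameter; call a symbol \emph{heavy} if it lies in many (say at least $\sqrt n$) of the lists and \emph{light} otherwise. The contribution of light symbols is immediately $O(n^{3/2})$ since there are at most $n$ of them and each appears in $O(\sqrt n)$ lists. For the heavy symbols, there are at most $O(\sqrt n)$ of them, and I would split the alphabet of heavy symbols into two halves $X_1$ and $X_2$ of equal size; restricting each $A^i$ to $X_1$ and to $X_2$ gives two sub-instances, each on an alphabet of half the size, to which we recurse. The crucial point that makes the recursion close is a \emph{crossing} bound: for each list $A^i$, the number of symbols of $X_2$ that appear in $A^i$ ``interleaved'' with symbols of $X_1$ (i.e.\ not forming a contiguous block) must be controlled, because whenever two symbols of $X_1$ straddle a symbol of $X_2$ in $A^i$, the no-common-triple condition forces a strong restriction on which other lists $A^j$ can contain all three of those symbols. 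Summing these local restrictions over all pairs $(i,j)$ and all straddling triples gives, via a double counting argument, that the total ``extra'' contribution beyond the two recursive sub-instances is $O(n^{3/2})$ at each level.

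Writing $f(n)$ for the maximum of $\sum_i|A^i|$ over all valid instances on an $n$-symbol alphabet with at most $n$ lists, the recursion would take the shape $f(n) \le 2 f(n/2) + O(n^{3/2})$, whose solution is $f(n) = O(n^{3/2}\log n)$ --- exactly matching the statement. (To make the recursion honest one has to track both the alphabet size and the number of lists; since a symbol is heavy only if it is in $\ge \sqrt n$ lists, the number of heavy symbols is $O(\sqrt n)$, and one restricts attention to lists meeting the current heavy alphabet, keeping the two parameters in the right ratio at every level.)

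The main obstacle I expect is the crossing/double-counting step: correctly formulating what it means for a symbol of one half to be ``crossed'' by a pair from the other half, and then showing that the no-common-triple hypothesis limits the number of lists sharing such a crossed triple tightly enough that the total overcount telescopes to $O(n^{3/2})$ per level rather than something larger (which would give a worse power of $\log n$, or worse, a larger polynomial factor). A secondary technical point is handling the reduction from cyclic to linear orders cleanly, ensuring that the "no three symbols in the same order" condition is genuinely inherited (up to constants) and that symbols appearing in a list but not in the current sub-alphabet are accounted for without double-charging. Everything else --- the heavy/light split, the balanced bisection of the heavy alphabet, and solving the recurrence --- is routine once the crossing lemma is in place.
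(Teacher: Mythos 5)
Two gaps prevent this from working. The first is the claim that there are $O(\sqrt n)$ heavy symbols: a symbol heavy in at least $\sqrt n$ lists contributes at least $\sqrt n$ to $\sum_i|A^i|$, so the number of heavy symbols is at most $\sum_i|A^i|/\sqrt n$ --- which is $O(n)$, not $O(\sqrt n)$, even if one grants the conclusion $\sum_i|A^i|=O(n^{3/2})$, and which a priori can be $\Theta(n)$. So the two parameters you want to keep ``in the right ratio'' simply do not stay in that ratio as the recursion descends.

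The second gap is structural. The quantity $\sum_i|A^i|$ is additive over any partition of the alphabet: for $X=X_1\cup X_2$ disjoint, $\sum_i|A^i|=\sum_i|A^i\cap X_1|+\sum_i|A^i\cap X_2|$ exactly. Bisecting the heavy alphabet therefore produces no ``crossing excess'' to control --- the two sub-instances already account for every incidence. A recursion of the form $f(m)\le 2f(m/2)+(\text{crossing term})$ with a nonnegative crossing term is strictly weaker than this identity, and unwinding it gives nothing better than the trivial bound $nm$. For a divide-and-conquer on the alphabet to gain anything, you would have to recurse on a superlinear statistic (co-occurring pairs or triples of symbols), at which point the ``crossing lemma'' would carry essentially the entire difficulty of the theorem; as stated it is too vague to check. (A minor further sign that the recursion is not the right shape: were $f(n)\le 2f(n/2)+O(n^{3/2})$ genuinely in force with $n$ the recursion variable, the master theorem would give $O(n^{3/2})$ outright, not $O(n^{3/2}\log n)$.)

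For comparison, both the Marcus--Tardos argument and the sharper proof in this paper split the \emph{sequences} $A^i$ (not the alphabet) into consecutive pieces $A^i_\epsilon$, and study the signed global count
\[
S=\sum_{i\ne j}\sum_{\epsilon}\sum_{a\ne b} f(A^i_\epsilon,A^j_\epsilon,a,b),
\]
where $f(B,B',a,b)=\pm 1$ records whether $a,b$ agree or disagree in order in $B$ and $B'$. One lower-bounds $S\ge -\tfrac12\sum_i|A^i|^2$ by completing squares, and upper-bounds it via Dilworth's theorem (this is where the no-common-triple hypothesis enters) together with a double count. Marcus--Tardos cut each sequence into roughly $\log n$ short pieces, which intersect too rarely; that is the source of the $\log n$ loss, and the present paper removes it by cutting into only two pieces and adding a quadratic double-counting step. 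Your cyclic-to-linear reduction and light/heavy split are fine and match the standard openings (indeed a single arbitrary cut per cyclic list already yields the no-common-triple property, with no constant-factor loss), but the alphabet-bisection core does not connect to any of this machinery, and I do not see how to make it close.
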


Note that this result is tight up to the logarithmic factor. Indeed, it is well-known that $\ex(n,C_4)=\Theta(n^{3/2})$ (see, e.g., \cite{furedi2013history}). This implies that there are examples of cyclically ordered lists $A^1,A^2,\dots,A^n$ with $\sum_{i=1}^n |A^i| = \Theta(n^{3/2})$ where no distinct $A^i$ and $A^j$ share more than one symbol (so they are trivially intersection-reverse). 

Consequently, Marcus and Tardos raised the following natural question.

\begin{question}[Marcus--Tardos \cite{marcus2006intersection}]
    Is the logarithmic factor needed in Theorem \ref{thm:cyclic orders log n}?
\end{question}

We answer this question by showing that the logarithmic factor can be completely removed.

\begin{theorem} \label{thm:main cyclic}
    Let $A^1,\dots,A^n$ be a collection of pairwise intersection-reverse cyclically ordered lists on some subsets of a set of $n$ symbols. Then $\sum_{i=1}^n |A^i|=O(n^{3/2})$.
\end{theorem}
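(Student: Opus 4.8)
The plan is to deduce the cyclic statement from the more general linear-order statement advertised in the abstract, which I will prove directly. Given pairwise intersection-reverse cyclic orders $A^1,\dots,A^n$, the first step is to convert each cyclic order into a bounded number of linear orders so that the "no three symbols in the same order" condition is inherited. The natural way: for each cyclic order $A^i$, pick an arbitrary cut point to turn it into a linear order $B^i$; if $x,y,z$ appear in the same linear order in $B^i$ and $B^j$, then they appear in the same cyclic order in $A^i$ and $A^j$, contradicting intersection-reversedness. This already gives the bound with $n$ linear orders on $n$ symbols, so it suffices to prove: if $B^1,\dots,B^n$ are linear orders on subsets of $[n]$ such that no three symbols appear in the same relative order in two distinct $B^i$, then $\sum_i |B^i| = O(n^{3/2})$.

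For the linear-order statement, the core idea is a charging/dichotomy argument on "3-term patterns". Call a triple $(x,y,z)$ of symbols appearing in consecutive-or-not order within some $B^i$ a \emph{witness}; the hypothesis says each ordered triple $(x,y,z)$ is a witness for at most one index $i$. I would first reduce to the case where every list has length exactly $d$ (dyadic bucketing: restrict to lists with $|B^i| \in [d, 2d)$, losing only a constant factor after summing a geometric series, and noting there are at most $n$ such lists), so the target becomes $nd = O(n^{3/2})$, i.e. $d = O(\sqrt n)$ when there are $\Theta(n)$ lists of length $d$ — more precisely, if $m$ lists each have length $\geq d$, we want $md = O(n^{3/2})$. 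Each length-$d$ list contains $\binom{d}{3} = \Theta(d^3)$ ordered increasing triples (reading left to right), and these triples are globally distinct across lists by hypothesis, giving $m \cdot \Theta(d^3) \leq \binom{n}{3} = O(n^3)$, hence $m d^3 = O(n^3)$. Combined with the trivial $m \leq n$, this gives $md = m^{2/3}(md^3)^{1/3} \leq n^{2/3} \cdot O(n) = O(n^{5/3})$ — not good enough. So the triple-counting alone only recovers a weaker exponent, and the real work is to exploit the \emph{ordered} structure more cleverly.

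The step I expect to be the main obstacle is squeezing the exponent from $5/3$ down to $3/2$, and here I would bring in the recursive / divide-and-conquer structure that is characteristic of Marcus–Tardos-type arguments. Specifically, I would split the alphabet $[n]$ into $\sqrt n$ blocks of size $\sqrt n$ each, and for each list $B^i$ classify its elements by which block they fall into; within a single block, a list of length $\ell$ contributes a sub-list, and the key observation is that pairs of elements in the \emph{same} block behave like a $C_4$-free / Zarankiewicz-type incidence structure (two lists sharing two same-block symbols in the same order is forbidden, giving an $\ex(n,C_4)$-style bound of $O(n^{3/2})$ on the total number of same-block pairs summed over lists), while pairs in \emph{different} blocks are controlled recursively by contracting each block to a single symbol and applying the theorem to the resulting $\sqrt n$-symbol instance. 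Setting up the recursion $f(n) \leq C\cdot n^{3/2} + \sqrt n \cdot f(\sqrt n) \cdot (\text{something})$ carefully — in particular getting the bookkeeping right so that the recursion actually closes at $O(n^{3/2})$ rather than picking up a $\log$ factor (which is exactly where Marcus–Tardos lost their $\log n$) — is the delicate heart of the argument. The improvement over Theorem \ref{thm:cyclic orders log n} presumably comes from a sharper handling of the cross-block terms, perhaps by a weighted or amortized charging scheme that avoids the geometric-series blow-up, or by choosing block sizes adaptively rather than always taking $\sqrt n$.
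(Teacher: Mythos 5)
Your reduction from cyclic to linear orders (cut each $A^i$ arbitrarily to get $B^i$; three symbols in the same linear order in $B^i,B^j$ would give three symbols in the same cyclic order in $A^i,A^j$) is exactly the paper's, and your triple-counting computation correctly shows that the ``each ordered triple lives in at most one list'' observation alone only yields $O(n^{5/3})$. From that point on, however, the proposal has a genuine gap: the entire route from $5/3$ to $3/2$ is left as a sketch of a recursion you do not carry out, and the one concrete claim offered to drive it is false. The hypothesis forbids \emph{three} symbols appearing in the same order in two lists, not two: by Dilworth, two lists whose intersection has size $k$ can agree on the order of about $k^2/4$ pairs (split the intersection into two reverse-ordered chains; pairs straddling the chains may agree, e.g.\ $A^i=1,\dots,k$ against $k/2,\dots,1,k,\dots,k/2+1$). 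So ``two lists sharing two same-block symbols in the same order is forbidden'' does not hold, the same-block incidence structure is not $C_4$-free, and the recursion has no base case. Two further soft spots: summing the dyadic buckets gives $O(n^{3/2}\log n)$, not a geometric series, unless the per-scale bound decays with $d$; and closing a Marcus--Tardos-style recursion without a log loss is, as you yourself note, ``the delicate heart'' --- which is precisely what is missing.

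The paper's actual argument is quite different and never decomposes the alphabet. It splits each \emph{list} $A^i$ into two halves $A^i_0,A^i_1$ and studies the signed sum $S=\sum_{i\ne j}\sum_{\epsilon}\sum_{a\ne b}f(A^i_\epsilon,A^j_\epsilon,a,b)$, where $f=\pm1$ records agreement or disagreement on the order of $a,b$. Swapping the order of summation makes $S$ essentially a sum of squares, so $S\ge-\tfrac12\sum_i|A^i|^2$. For the upper bound, Dilworth gives $\sum_{a\ne b}f(A^i_\epsilon,A^j_\epsilon,a,b)\le|A^i_\epsilon\cap A^j_\epsilon|$ in general and $=k-k^2$ when the half-pair is intersection-reverse; the key new observation is that if $A^i_\epsilon,A^j_\epsilon$ is \emph{not} intersection-reverse then $A^i_{1-\epsilon}$ and $A^j_{1-\epsilon}$ must be disjoint (otherwise one exhibits three symbols in the same order in $A^i$ and $A^j$). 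This lets one lower bound the total quadratic gain by $\sum_{i\ne j}2|A^i_0\cap A^j_0|\,|A^i_1\cap A^j_1|$, which in turn is bounded below by $\Omega(M^4/n^4)$ via double counting and convexity after a Jiang--Seiver almost-regularization (this regularization is also what replaces your dyadic bucketing). Comparing the two bounds on $S$ yields $M=O(n^{3/2})$. As written, your proposal establishes the reduction and the $O(n^{5/3})$ bound but does not prove the theorem.
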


In fact, we deduce Theorem \ref{thm:main cyclic} from a stronger result which concerns linear orders rather than cyclic orders.

\begin{theorem}\label{theorem_linear orders}
    There exists $C>0$ such that for any positive integer $n$, if $A^1,\dots,A^n$ are linear orders on some subsets of a set of $n$ symbols such that no three symbols appear in the same order in any two distinct linear orders $A^i$ and $A^j$, then $\sum_{i=1}^n |A^i|\leq Cn^{3/2}$.    
\end{theorem}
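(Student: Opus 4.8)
The plan is to prove Theorem~\ref{theorem_linear orders} via a dyadic/recursive decomposition argument, in the spirit of the Marcus--Tardos approach but organized so that the geometric series of error terms sums to a constant rather than to $\log n$. Write $m = \sum_{i=1}^n |A^i|$ for the total length. The condition ``no three symbols appear in the same order in $A^i$ and $A^j$'' is exactly the statement that for any two distinct orders, their common elements, read in the order of $A^i$, form a sequence that is \emph{unimodal} in the order of $A^j$: there are no two disjoint increasing pairs, so the permutation relating the two orders avoids the pattern that would create a monotone subsequence of length $3$. I would first record this reformulation as a lemma: if $S = A^i \cap A^j$ has size $s$, then listing $S$ in the order of $A^i$ produces a sequence whose positions in $A^j$ first increase and then decrease (or, fixing a cyclic convention for the cyclic case, this is where the cyclic version reduces to the linear one). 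This ``each pair meets in a unimodal set'' property is the workhorse.

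The core of the argument is a divide-and-conquer on the $n$ symbols. Split the symbol set $X$ into two halves $X_1, X_2$ of size $n/2$ each; each order $A^i$ restricts to an order on $A^i \cap X_1$ and on $A^i \cap X_2$, so $m = m_1 + m_2$ where $m_t = \sum_i |A^i \cap X_t|$, and recursively each part is a valid instance of the same problem on $n/2$ symbols (the unimodality condition is inherited by restriction). This alone gives nothing; the gain must come from a separate, \emph{global} bound that is strong precisely when many orders are long relative to $n$. The key quantitative lemma I would aim for is: if every $A^i$ has length $\geq \ell$, then $\sum_i |A^i| = O(n^{3/2})$ with an absolute constant independent of $\ell$ — or, more usefully in a recursion, a bound of the form $\sum_i|A^i| \le c\, n\sqrt{n} $ once $\ell \ge \sqrt n$, proved by a charging/counting argument. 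Concretely: fix, in each $A^i$ of length $\ge 2\sqrt n$, a set of $\Theta(\sqrt n)$ roughly equally spaced ``milestone'' symbols; a pair $A^i, A^j$ sharing at least, say, $3$ symbols that are milestones in both would, by unimodality, still be allowed, so one needs the sharper observation that two orders can share at most $O(1)$ milestones \emph{that are consecutive-block representatives}, or else encode a forbidden configuration — this is the step that needs care. Counting incidences between the $n$ orders and $n$ symbols, where order $i$ is incident to its milestones, and bounding the number of orders sharing two milestones using the unimodality (a pair of milestones determines an ``interval'' and two orders cannot both have a long common run), should yield via a Kővári–Sós–Turán-type or convexity estimate that the number of (order, milestone) incidences is $O(n^{3/2})$, hence there are $O(n^{3/2}/\sqrt n) = O(n)$ long orders each of length $O(\sqrt n \cdot \text{something})$ — one has to balance the milestone spacing to close this.

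Assembling the recursion: let $f(n)$ be the maximum of $\sum_i |A^i|$ over all valid instances with $n$ symbols and $n$ orders (allowing fewer than $n$ orders, or padding with empty orders, so the function is monotone). Partition the orders into ``short'' ones, with $|A^i| < \sqrt n$, contributing at most $n \cdot \sqrt n = n^{3/2}$ in total, and ``long'' ones. For the long orders, apply the global lemma to get a bound $O(n^{3/2})$ directly — if the global lemma is clean enough, the recursion is not even needed and $f(n) = O(n^{3/2})$ follows in one step. The reason I nonetheless expect to need the dyadic split is that the global lemma is likely only provable in the weaker form ``$\sum_i |A^i| \le O(n^{3/2}) + \tfrac12 m$'' or ``the long orders restricted to one half of the symbols still satisfy the hypothesis, and the milestone argument gives $O(n^{3/2})$ only after removing a constant fraction'' — in which case one writes $f(n) \le C_0 n^{3/2} + 2 f(n/2)$, and since $\sum_{k\ge 0} 2 \cdot (n/2^k)^{3/2} / n^{3/2} \cdot (\text{hmm})$ — wait, $2 \cdot (n/2)^{3/2} = 2^{-1/2} n^{3/2}$, so the recursion $f(n) \le C_0 n^{3/2} + 2f(n/2)$ solves to $f(n) \le C_0 n^{3/2} \sum_{k\ge 0} 2^{-k/2} = O(n^{3/2})$, a convergent geometric series — this is exactly where the $\log n$ of Marcus--Tardos is saved, because they lose a factor at each level whereas here the branching factor $2$ is beaten by the $2^{3/2}$ scaling of the target function.

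The main obstacle, and where I would spend the most effort, is the global lemma for long orders: extracting from the purely combinatorial unimodality condition a genuinely ``incidence-geometric'' bound of Kővári–Sós–Turán strength. The difficulty is that unimodality of pairwise intersections is much weaker than, say, the orders being intervals or convex position data, so the standard trick — two objects sharing two ``points'' forces them to agree on a whole interval, bounding codegrees — must be replaced by something that exploits the mode of the unimodal sequence. I would try: for each long order pick its milestones and then, for a pair $i,j$ with a large common milestone set, look at the position of the mode; the mode splits each order's common part into an increasing and a decreasing run, and increasing-vs-increasing or decreasing-vs-decreasing runs must be length $\le 1$ — so actually the common milestone set has size $\le 2$ unless mode positions interleave favorably, giving a codegree bound of $O(1)$ and immediately a $C_4$-free–type incidence count of $O(n^{3/2})$. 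Verifying that this ``$\le 2$ common milestones'' claim is literally true (with the right definition of milestone, and handling the cyclic-to-linear reduction and the mode on the boundary) is the crux; everything else is bookkeeping and the geometric series above.
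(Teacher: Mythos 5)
Your proposal diverges substantially from the paper's proof and, more importantly, the central ingredient you rely on is false. The key issues are the following.

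First, the reformulation of the hypothesis is incorrect. ``No three symbols appear in the same order in $A^i$ and $A^j$'' means that the permutation relating the two orders on $A^i\cap A^j$ has no increasing subsequence of length $3$; by Dilworth's theorem this is equivalent to saying that $A^i\cap A^j$ can be partitioned into \emph{two} subsets, each of which appears in reverse order in $A^i$ and $A^j$. It is \emph{not} equivalent to the common part being unimodal, nor to ``no two disjoint increasing pairs.'' For instance, a permutation of pattern $2,1,4,3$ has no increasing subsequence of length $3$ yet is not unimodal and has the two disjoint increasing pairs $(2,4)$ and $(1,3)$; conversely a unimodal sequence such as $1,3,5,4,2$ has disjoint increasing pairs $(1,5)$ and $(3,4)$.

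Second, and fatally, the ``global lemma'' you aim for cannot hold in the form you need. Under the actual hypothesis, two orders $A^i,A^j$ may share $\Theta(\sqrt n)$ symbols, as long as these common symbols are (close to) reversed between the two orders. Consequently there is no $O(1)$ bound on the number of common milestones, and no bounded codegree: the incidence graph between orders and symbols is far from $C_4$-free, so a K\H{o}v\'ari--S\'os--Tur\'an-style count does not bound $\sum_i|A^i|$. Your own hedging (``unless mode positions interleave favorably'') is pointing at exactly this: the claim $\le 2$ common milestones is simply false. A further issue is the bookkeeping of the recursion: halving the \emph{symbol} set leaves the number $n$ of orders unchanged, so the subinstance has $n$ orders on $n/2$ symbols and the $2\cdot 2^{-3/2}$-type geometric gain you invoke does not appear unless both parameters shrink, which they do not.

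The paper's proof takes a completely different, non-recursive route. After reducing to an almost-regular incidence graph (Jiang--Seiver), each order $A^i$ is split into only two halves $A^i_0,A^i_1$, and one studies the signed quantity
\[
S=\sum_{i\ne j}\sum_{\epsilon\in\{0,1\}}\sum_{a\ne b} f(A^i_\epsilon,A^j_\epsilon,a,b),
\]
where $f$ is $\pm1$ according to whether $a,b$ appear in the same or opposite order. The lower bound $S\ge -\frac12\sum_i|A^i|^2$ is immediate (sum of squares). The upper bound uses Dilworth to control non-reverse pairs, and crucially the observation that if $A^i_\epsilon,A^j_\epsilon$ is \emph{not} intersection-reverse then $A^i_{1-\epsilon}$ and $A^j_{1-\epsilon}$ are disjoint. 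This lets them lower bound $\sum_{\text{int-rev}}|A^i_\epsilon\cap A^j_\epsilon|^2$ by $2\sum_{i\ne j}|A^i_0\cap A^j_0||A^i_1\cap A^j_1|$ and in turn by a double count involving $\sum_i|A^i_0||A^i_1|$. Comparing upper and lower bounds on $S$ yields $\sum_i|A^i|=O(n^{3/2})$ directly, with no recursion. If you want to salvage your recursive framework, you would need a fundamentally different ``global lemma'' that does not rest on small codegrees; as stated, the proposal has a gap that cannot be closed.
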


Given pairwise intersection-reverse cyclic orders $A^1,\dots,A^n$, we can turn each $A^i$ into a linear order $B^i$ by arbitrarily selecting the symbol where $B^i$ starts. Clearly, for any $i\neq j$, no three symbols appear in the same order in $B^i$ and $B^j$ (else the same three symbols are not reverse ordered in $A^i$ and $A^j$). Hence, by Theorem \ref{theorem_linear orders}, we have $\sum_{i=1}^n |A^i|=\sum_{i=1}^n |B^i|\leq Cn^{3/2}$. This shows that Theorem~\ref{theorem_linear orders} indeed implies Theorem \ref{thm:main cyclic}.

As demonstrated in \cite{marcus2006intersection}, Theorem \ref{thm:cyclic orders log n} has far-reaching consequences in Discrete Geometry. In Sections~\ref{subsec:selfcrossing}--\ref{subsec:tangencies}, we discuss some of the geometric consequences of our main results (Theorems \ref{thm:main cyclic} and \ref{theorem_linear orders}). In Section~\ref{edge-orderedapplication}, we present another application of our main results in the extremal theory of edge-ordered graphs.

\subsection{Self-crossing four-cycles in topological graphs}
\label{subsec:selfcrossing}

As mentioned above, using Theorem \ref{thm:cyclic orders log n}, Marcus and Tardos \cite{marcus2006intersection} showed the following.

\begin{theorem}[Marcus--Tardos \cite{marcus2006intersection}] \label{cor:MT C4}
   Every $n$-vertex topological graph without a self-crossing four-cycle has $O(n^{3/2}\log n)$ edges.
\end{theorem}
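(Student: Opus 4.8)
The plan is to deduce Theorem~\ref{cor:MT C4} from Theorem~\ref{thm:cyclic orders log n} via the connection already sketched in the excerpt. Let $G$ be an $n$-vertex topological graph with no self-crossing four-cycle. For each vertex $v$, form the cyclic list $L_G(v)$ of its neighbours ordered by the counterclockwise order of the initial segments of the incident edges at $v$; then $|L_G(v)| = \deg_G(v)$ and $\sum_v |L_G(v)| = 2e(G)$. The key structural fact, due to Pinchasi and Radoi\v ci\'c, is that if $u\neq v$ and the lists $L_G(u)$ and $L_G(v)$ are \emph{not} intersection-reverse, then $G$ contains a self-crossing four-cycle. I would spend a sentence or two recalling why: if $u$ and $v$ have two common neighbours $a,b$ that appear in the \emph{same} cyclic order in $L_G(u)$ and $L_G(v)$ (rather than reversed), then routing along $u$--$a$--$v$--$b$--$u$ produces a closed curve whose four arcs are forced to cross each other (two of the four edges must intersect in an interior point), giving a self-crossing $C_4$. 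Hence, since $G$ has no self-crossing $C_4$, the $n$ cyclic orders $A^v := L_G(v)$, $v\in V(G)$, are pairwise intersection-reverse.

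Now apply Theorem~\ref{thm:cyclic orders log n} to the collection $\{A^v : v\in V(G)\}$: these are at most $n$ cyclic orders on subsets of the $n$-element alphabet $V(G)$, and they are pairwise intersection-reverse, so $\sum_v |A^v| = O(n^{3/2}\log n)$. Since the left-hand side equals $2e(G)$, we conclude $e(G) = O(n^{3/2}\log n)$, which is the claimed bound.

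A couple of small points need care but are routine. First, $L_G(v)$ is a genuine cyclic order only after fixing a starting point, but intersection-reverseness is a property of cyclic orders per se, so this is not an issue; one should just make sure the alphabet is literally the vertex set so that ``common elements'' means common neighbours. Second, one should note that degenerate incidences (an edge passing through a vertex, tangential crossings) are excluded by the standing assumptions on topological graphs made earlier in the paper, so the cyclic order $L_G(v)$ and the crossing argument are well-defined. The main obstacle — really the only nontrivial content — is the verification of the Pinchasi--Radoi\v ci\'c observation that non-intersection-reverse neighbour lists force a self-crossing four-cycle; everything after that is a direct substitution into Theorem~\ref{thm:cyclic orders log n}. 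Since that observation is already attributed and used in the excerpt, in the write-up I would state it, give the short topological argument (or cite \cite{pinchasinumber} and \cite{marcus2006intersection}), and then close with the one-line counting deduction above.
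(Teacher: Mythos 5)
Your proposal is correct and is exactly the argument the paper has in mind: the result is quoted from Marcus--Tardos, and the reduction (pass to the pairwise intersection-reverse cyclic orders $L_G(v)$ via the Pinchasi--Radoi\v{c}i\'c observation, then apply Theorem~\ref{thm:cyclic orders log n} and use $\sum_v |L_G(v)| = 2e(G)$) is precisely the one described in the introduction. One small correction to your parenthetical sketch: a pair of common neighbours can never witness the failure of intersection-reverseness, since any two cyclic orders restricted to two elements coincide; the witness must be \emph{three} common neighbours of $u$ and $v$ appearing in the same cyclic order in $L_G(u)$ and $L_G(v)$, and the topological argument extracting a self-crossing $C_4$ from the resulting six edges is the actual content of the cited observation --- since you defer to \cite{pinchasinumber} for that step, the proof stands.
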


As we noted earlier, there exist $C_4$-free graphs on $n$ vertices with $\Theta(n^{3/2})$ edges, so the bound in Theorem~\ref{cor:MT C4} is tight up to the logarithmic factor. Using Theorem~\ref{thm:main cyclic}, we obtain a tight bound for the maximum number of edges in a topological graph without a self-crossing four-cycle.

\begin{corollary} \label{cor:crossing C4}
    Every $n$-vertex topological graph without a self-crossing four-cycle has $O(n^{3/2})$ edges.
\end{corollary}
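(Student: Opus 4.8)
The plan is to deduce Corollary~\ref{cor:crossing C4} from Theorem~\ref{thm:main cyclic} by following the reduction of Pinchasi and Radoi\v{c}i\'c~\cite{pinchasinumber}, but invoking our improved bound in place of Theorem~\ref{thm:cyclic orders log n}. Let $G$ be an $n$-vertex topological graph with no self-crossing four-cycle. For each vertex $v$, let $L_G(v)$ be the cyclic list of the neighbours of $v$, ordered according to the counterclockwise cyclic order in which the initial segments of the incident edges leave $v$; thus $|L_G(v)| = \deg_G(v)$ and $\sum_v |L_G(v)| = 2e(G)$. The key structural observation, due to Pinchasi and Radoi\v{c}i\'c, is that if $u$ and $v$ are distinct vertices whose lists $L_G(u)$ and $L_G(v)$ are \emph{not} intersection-reverse, then $G$ contains a self-crossing four-cycle. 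I would include a short proof of this: if $x,y$ are two common neighbours of $u$ and $v$ that appear in the same cyclic order in both lists (which, since these are two-element overlaps, means the four-cycle $uxvy$ is drawn so that the two edges at $u$ and the two edges at $v$ "wind the same way"), then a parity/rotation argument shows the closed curve formed by the four edges $ux, xv, vy, yu$ must cross itself. This is the only genuinely geometric step, and it is the part I expect to require the most care — one has to argue cleanly that a four-cycle drawn with consistent rotation at both non-adjacent vertices cannot be drawn as a simple closed curve, using the fact that edges cross properly and finitely often.

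Granting that observation, the lists $L_G(v)$, $v \in V(G)$, form a collection of $n$ cyclic orders on subsets of the $n$-element vertex set that are pairwise intersection-reverse. Theorem~\ref{thm:main cyclic} then gives $\sum_{v} |L_G(v)| = O(n^{3/2})$, and since this sum equals $2e(G)$, we conclude $e(G) = O(n^{3/2})$, as required.

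One minor technical point to address is degenerate overlaps: two vertices $u,v$ might share fewer than two common neighbours, in which case $L_G(u)$ and $L_G(v)$ are automatically intersection-reverse (the condition is vacuous), so no self-crossing four-cycle is forced and nothing needs to be checked. Also, if $u$ and $v$ are themselves adjacent, the edge $uv$ plays no role in the argument since we only look at common neighbours distinct from $u,v$; one should just note that a "four-cycle" here means a cycle $uxvy$ on four distinct vertices, and its being self-crossing refers to the drawing of its four edges. Finally, I would remark that the bound is tight: since $\ex(n, C_4) = \Theta(n^{3/2})$ and a $C_4$-free abstract graph drawn (say) as a geometric graph trivially has no four-cycle at all, let alone a self-crossing one, Corollary~\ref{cor:crossing C4} is best possible up to the constant factor. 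This resolves the problem of Marcus and Tardos by removing the logarithmic gap in Theorem~\ref{cor:MT C4}.
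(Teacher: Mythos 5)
Your overall strategy is exactly the one the paper (implicitly) uses: form the cyclic lists $L_G(v)$, observe that they are pairwise intersection-reverse whenever $G$ has no self-crossing four-cycle, and then apply Theorem~\ref{thm:main cyclic} to conclude $\sum_v|L_G(v)|=2e(G)=O(n^{3/2})$. The paper states this as a corollary precisely because the reduction is the standard one of Pinchasi and Radoi\v{c}i\'c and already appears in~\cite{pinchasinumber,marcus2006intersection}.

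However, your informal sketch of the geometric lemma is not correct as stated, and it would not survive being written out. You claim that if $x,y$ are two common neighbours of $u$ and $v$ that appear in the same cyclic order in both lists, then the four-cycle $uxvy$ self-crosses. But a cyclic order on a two-element set carries no information: there is only one cyclic order on $\{x,y\}$, so ``appear in the same cyclic order'' is always true and could not possibly force a self-crossing (and of course plenty of topological graphs contain non-self-crossing four-cycles). The intersection-reverse condition on cyclic orders is vacuous when the overlap has size at most two; it only constrains triples. The genuine Pinchasi--Radoi\v{c}i\'c observation is: if $u$ and $v$ have three common neighbours $x,y,z$ that appear in the \emph{same} cyclic order around $u$ and around $v$, then at least one of the three four-cycles $uxvy$, $uyvz$, $uxvz$ must be self-crossing. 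Equivalently, if \emph{none} of the four-cycles through $u$ and $v$ self-crosses, then the cyclic orders of the common neighbours around $u$ and around $v$ are reverses of each other, i.e.\ $L_G(u)$ and $L_G(v)$ are intersection-reverse. The proof uses the Jordan curve theorem: if $uxvy$ is a simple closed curve, then the initial segments of $uz$ at $u$ and of $vz$ at $v$ must start on the same side of this curve (since both edges end at the same point $z$ after crossing the curve an even number of times, assuming $uxvz$ and $uyvz$ are also non-self-crossing), and this forces the rotation at $v$ to be opposite to the rotation at $u$. Your two-vertex version of the claim should be replaced by this three-vertex version; once that is done, the rest of your argument (including the correct remarks about degenerate overlaps and tightness) goes through exactly as you wrote it.
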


Of course, this implies that every $n$-vertex geometric graph without a self-crossing four-cycle has $O(n^{3/2})$ edges.

\subsection{Number of incidences between pseudo-circles and points}
\label{subsec:incidences}

Perhaps the most important consequence of our result concerns cutting pseudo-circles and pseudo-parabolas into pseudo-segments, which are defined as follows.

\begin{definition}
\begin{enumerate}[label=$\mathrm{(\alph*)}$]
    \item  A collection of \emph{pseudo-circles} is a collection of simple closed Jordan curves, any two of which intersect at most twice, with proper crossings at each intersection.
\item  A collection of \emph{pseudo-parabolas} is a collection of graphs of continuous real functions defined on the entire real line such that any two intersect at most twice and they properly cross at these intersections.

\item     A collection of \emph{pseudo-segments} is a collections of curves, any two of which intersect at most once.

\item    For a collection $\mathcal{C}$ of pseudo-circles, the \emph{cutting number} $\tau(\mathcal{C})$ is the minimum number of cuts that transforms $\mathcal{C}$ into a collection of pseudo-segments. 
\end{enumerate}
\end{definition}

In 1998, Tamaki and Tokuyama~\cite{TT98} considered the problem of cutting pseudo-parabolas into pseudo-segments. Such results are very useful since pseudo-segments are much easier to work with compared to pseudo-parabolas and pseudo-circles. Furthermore, as we will see shortly, bounds on the cutting number of pseudo-circles directly translate into bounds on the number of incidences between pseudo-circles and points. Tamaki and Tokuyama~\cite{TT98} proved that $n$ pseudo-parabolas can be cut into $O(n^{5/3})$ pseudo-segments. Aronov and Sharir~\cite{AS02} showed that $n$ circles can be cut into $O(n^{3/2+o(1)})$ pseudo-segments. Agarwal, Aronov, Pach, Pollack and Sharir~\cite{agarwal2004lenses} used the result of Pinchasi and Radoi\v{c}i\'c about topological graphs without self-crossing four-cycles to prove that $n$ pseudo-parabolas or $n$ $x$-monotone pseudo-circles can be cut into $O(n^{8/5})$ pseudo-segments.

Marcus and Tardos \cite{marcus2006intersection} used Theorem \ref{thm:cyclic orders log n} to improve all of the above results.

\begin{theorem}[Marcus--Tardos \cite{marcus2006intersection}] \label{thm:MT pseudocircles}
    Let $\mathcal{C}$ be a collection of $n$ pseudo-parabolas or a collection of $n$ pseudo-circles. Then $\mathcal{C}$ can be cut into $O(n^{3/2}\log n)$ pseudo-segments.
\end{theorem}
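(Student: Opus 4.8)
The plan is to combine Theorem~\ref{thm:cyclic orders log n} with the classical reduction, due to Tamaki and Tokuyama~\cite{TT98} and to Agarwal, Aronov, Pach, Pollack and Sharir~\cite{agarwal2004lenses}, of the cutting problem to the problem of bounding the number of pairwise non-overlapping \emph{lenses} in the arrangement. Recall that a lens is a closed region of the plane bounded by one arc of some $C_i$ and one arc of some $C_j$, $i\ne j$, meeting at two crossing points and otherwise disjoint, and that a family of lenses is \emph{non-overlapping} if for every curve $C_i$ the arcs lying on $C_i$ of the lenses in the family have pairwise disjoint interiors.

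First I would deduce from Theorem~\ref{thm:cyclic orders log n} the corresponding bound for topological graphs (that is, essentially Theorem~\ref{cor:MT C4}): if $H$ is an $n$-vertex topological graph with no self-crossing four-cycle, then by the observation of Pinchasi and Radoi\v{c}i\'c the cyclic lists $L_H(v)$, $v\in V(H)$, are pairwise intersection-reverse cyclic orders on subsets of the $n$-symbol set $V(H)$, and since $\sum_v|L_H(v)|=2|E(H)|$, Theorem~\ref{thm:cyclic orders log n} gives $|E(H)|=O(n^{3/2}\log n)$. Second, I would invoke the reduction of~\cite{TT98}: one has $\tau(\mathcal C)=O(L(\mathcal C)+n)$, where $L(\mathcal C)$ denotes the maximum size of a non-overlapping family of lenses in the arrangement of $\mathcal C$ -- indeed, making one cut on a suitable arc of each lens of a maximal non-overlapping family already turns $\mathcal C$ into a collection of pseudo-segments. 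Third, I would use the construction of~\cite{agarwal2004lenses}: given a non-overlapping family $\mathcal F$ of lenses in an arrangement of $n$ pseudo-circles, one obtains an $n$-vertex topological graph $H$ on the vertex set $\{C_1,\dots,C_n\}$, with an edge routed through each lens of $\mathcal F$ (keeping a single edge per pair of curves carrying a lens, which loses only a constant factor since two pseudo-circles cross at most twice), such that $H$ has no self-crossing four-cycle and $|E(H)|=\Omega(|\mathcal F|)$.

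Chaining the three facts: taking $\mathcal F$ with $|\mathcal F|=L(\mathcal C)$ and forming $H$ as in the third step gives $L(\mathcal C)=O(|E(H)|)=O(n^{3/2}\log n)$, whence $\tau(\mathcal C)=O(n^{3/2}\log n)$ by the second step. The case of pseudo-parabolas is entirely analogous (and was in fact treated first by Tamaki and Tokuyama). The main obstacle is the geometric content of the third step: one must verify that the non-overlapping property of $\mathcal F$ -- together with the fact that two pseudo-circles cross at most twice -- forces $H$ to be free of self-crossing four-cycles, equivalently that the cyclic orders in which the arcs of $\mathcal F$ appear around the various curves $C_i$ are pairwise intersection-reverse. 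This geometric step, where the non-overlapping condition is used essentially, is the crux of the argument; everything else is bookkeeping around Theorem~\ref{thm:cyclic orders log n}.
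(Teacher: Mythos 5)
The paper does not prove Theorem~\ref{thm:MT pseudocircles}; it is quoted as a result of Marcus and Tardos~\cite{marcus2006intersection}, and the only proof-relevant remark in the text is that the deduction of Corollary~\ref{cor:cutting number} from Theorem~\ref{thm:main cyclic} ``uses the reduction from~\cite{marcus2006intersection}.'' So there is nothing internal to the paper to compare your outline against. That said, the chain you describe---reduce the cutting number to the maximum size of a non-overlapping family of lenses, reduce that to the edge count of a topological graph with no self-crossing four-cycle, and then apply the $O(n^{3/2}\log n)$ bound via the Pinchasi--Radoi\v{c}i\'c observation---is a sensible reconstruction of the Marcus--Tardos route, and you correctly identify where the geometric content lies.

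Two concrete problems remain, however. First, your justification for step two is not correct as stated: cutting once through each lens of a \emph{maximal} non-overlapping family does not by itself yield pseudo-segments. A lens not in the family is only guaranteed to overlap some family member somewhere, which does not ensure that your single chosen cut on that member lands inside the offending lens and destroys it. The bound $\tau(\mathcal C)=O(L(\mathcal C)+n)$ is a genuine theorem of Tamaki and Tokuyama~\cite{TT98} (and of Agarwal et al.~\cite{agarwal2004lenses} for pseudo-circles), established by a more careful charging argument; it should be invoked as a black box rather than re-derived in one line. Second, the key geometric lemma---that a non-overlapping family of lenses among curves crossing pairwise at most twice produces a topological graph with no self-crossing four-cycle, equivalently pairwise intersection-reverse cyclic neighbour lists---is named but not carried out, and in the pseudo-circle case there are nontrivial details (routing edges through lenses to a chosen representative point per curve, handling the up to two lenses a crossing pair can carry, and matching the cyclic orientation conventions so that Theorem~\ref{thm:cyclic orders log n} applies). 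As written, the proposal is an outline with the hardest two steps cited or flagged but not proved.
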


A result of Agarwal et al. \cite{agarwal2004lenses} states that if $\mathcal{C}$ is a collection of $n$ pseudo-circles and $\mathcal{P}$ is a set of $m$ points, then the number of incidences between $\mathcal{C}$ and $\mathcal{P}$ satisfies $I(\mathcal{C},\mathcal{P})=O(m^{2/3}n^{2/3}+m+n+\tau(\mathcal{C}))$, where $\tau(\mathcal{C})$ is the cutting number of $\mathcal{C}$. In particular, Theorem \ref{thm:MT pseudocircles} implies that $I(\mathcal{C},P)=O(m^{2/3}n^{2/3}+m+n^{3/2}\log n)$.

Our Theorem \ref{thm:main cyclic} implies an improved bound for the cutting number of pseudo-parabolas and pseudo-circles (using the reduction from~\cite{marcus2006intersection}).

\begin{corollary} \label{cor:cutting number}
    Let $\mathcal{C}$ be a collection of $n$ pseudo-parabolas or a collection of $n$ pseudo-circles. Then $\mathcal{C}$ can be cut into $O(n^{3/2})$ pseudo-segments.
\end{corollary}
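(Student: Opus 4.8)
The plan is to reduce Corollary~\ref{cor:cutting number} to Theorem~\ref{thm:main cyclic} by literally replaying the argument of Marcus and Tardos~\cite{marcus2006intersection}, which is purely combinatorial once the log-free bound on intersection-reverse cyclic orders is available. The only thing their proof used about the cutting number was Theorem~\ref{thm:cyclic orders log n}; substituting Theorem~\ref{thm:main cyclic} for it changes the final bound from $O(n^{3/2}\log n)$ to $O(n^{3/2})$ and nothing else. So the main task is to recall the reduction and check that no step secretly reintroduces a logarithmic factor.

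First I would dispose of the pseudo-parabola case, since Tamaki and Tokuyama~\cite{TT98} already observed that a collection of $n$ pseudo-parabolas can be cut into pseudo-segments using only a number of cuts bounded by the number of ``empty lenses'' one needs to destroy, and Agarwal et al.~\cite{agarwal2004lenses} showed that pseudo-circles (or $x$-monotone ones) can be handled the same way after an affine-type normalization; alternatively one can note that the pseudo-parabola and pseudo-circle cases are equivalent for our purposes up to constants. Concretely, a \emph{lens} is a region bounded by two arcs, one from each of two curves, meeting at two crossing points and otherwise disjoint; one forms a maximal family of pairwise non-overlapping lenses and cuts one arc of each, and the number of cuts needed equals (up to a constant) the maximum size of a family of pairwise non-overlapping lenses. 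So it suffices to bound the latter by $O(n^{3/2})$.

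The heart of the Marcus--Tardos reduction is then to encode a family of pairwise non-overlapping lenses as an incidence structure on which one can run Theorem~\ref{thm:main cyclic}. For each pseudo-circle $C^i$ one records, walking along $C^i$, the cyclic order in which the lens-arcs lying on $C^i$ are encountered; the key geometric fact (proved in~\cite{agarwal2004lenses,marcus2006intersection}) is that if two pseudo-circles $C^i$ and $C^j$ each carry many lens-arcs, then the cyclic sequences of ``other curves involved'' read off from $C^i$ and from $C^j$ are, after a suitable bucketing, pairwise intersection-reverse — this is exactly the place where the non-overlapping hypothesis and the ``at most two intersections'' hypothesis get used. Feeding these $n$ cyclic orders into Theorem~\ref{thm:main cyclic} yields that the total number of (curve, lens-arc) incidences, and hence the number of lenses, is $O(n^{3/2})$. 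I would set this up carefully, keeping track of the constant but not optimizing it, and then conclude $\tau(\mathcal{C}) = O(n^{3/2})$.

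The step I expect to be the main obstacle — or at least the one requiring the most care — is verifying that the bucketing/normalization in the reduction does not cost a $\log n$ factor. In the original argument one sometimes splits curves or lenses into $O(\log n)$ classes by a dyadic parameter (e.g.\ the ``level'' or the length of an arc in the cyclic order) before applying the intersection-reverse bound, and if such a split were genuinely needed here it would leave a residual $\log n$. The resolution, which I would make explicit, is that Theorem~\ref{theorem_linear orders} is robust enough — it bounds $\sum_i |A^i|$ directly with no assumption that the orders have comparable lengths — that the dyadic decomposition can be avoided entirely: one applies it once to the full family of cyclic orders $L_{C^i}$ of all lens-arcs. Once that is checked, the bound $O(n^{3/2})$ for the cutting number, and the improved point–pseudo-circle incidence bound $I(\mathcal{C},\mathcal{P}) = O(m^{2/3}n^{2/3}+m+n^{3/2})$ via the result of Agarwal et al., follow immediately.
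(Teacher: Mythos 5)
Your proposal matches the paper's approach: Corollary~\ref{cor:cutting number} is deduced there simply by citing the reduction from~\cite{marcus2006intersection} (via pairwise non-overlapping lenses and intersection-reverse cyclic orders along each curve) and substituting Theorem~\ref{thm:main cyclic} for Theorem~\ref{thm:cyclic orders log n}, with no further argument given. Your concern about a hidden dyadic $\log$ factor is unfounded --- the $\log n$ in Theorem~\ref{thm:MT pseudocircles} came entirely from the old intersection-reverse bound, and the reduction itself is lossless, as one can see from the fact that the bound in Theorem~\ref{thm:MT pseudocircles} coincides with that of Theorem~\ref{thm:cyclic orders log n}.
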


Corollary \ref{cor:cutting number}, in turn, implies (see~\cite{agarwal2004lenses,AS02,marcus2006intersection}) the following polylogarithmically improved bounds for the number of incidences between points and (pseudo-)circles.
\begin{corollary}\label{cor:incidences}
    If $\mathcal{C}$ is a collection of $n$ pseudo-circles and a $\mathcal{P}$ is a set of $m$ points in the plane, then the number of incidences is
    $$I(\mathcal{C},\mathcal{P})=O(m^{2/3}n^{2/3}+m+n^{3/2}).$$
    If, in addition, $\mathcal{C}$ is a collection of circles (not just pseudo-circles), then
    $$I(\mathcal{C},\mathcal{P})=O(m^{2/3}n^{2/3}+m^{6/11}n^{9/11}+m+n).$$
\end{corollary}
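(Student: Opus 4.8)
The plan is to derive Corollary~\ref{cor:incidences} by feeding the improved cutting bound of Corollary~\ref{cor:cutting number} into the standard reductions from point--(pseudo-)circle incidences to the cutting number. For the first bound I would invoke the result of Agarwal, Aronov, Pach, Pollack and Sharir \cite{agarwal2004lenses}: for any family $\mathcal{C}$ of $n$ pseudo-circles and any set $\mathcal{P}$ of $m$ points one has $I(\mathcal{C},\mathcal{P}) = O(m^{2/3}n^{2/3} + m + n + \tau(\mathcal{C}))$. The mechanism is that cutting $\mathcal{C}$ into a collection $\Gamma$ of $N = O(n + \tau(\mathcal{C}))$ pseudo-segments can only increase incidences, so $I(\mathcal{C},\mathcal{P}) \le I(\Gamma,\mathcal{P})$, while the Szemer\'edi--Trotter-type bound for points and pseudo-segments (via the crossing lemma) gives $I(\Gamma,\mathcal{P}) = O(m^{2/3}N^{2/3} + m + N)$. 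Substituting $\tau(\mathcal{C}) = O(n^{3/2})$ from Corollary~\ref{cor:cutting number} and absorbing the $n$ term into $n^{3/2}$ yields $I(\mathcal{C},\mathcal{P}) = O(m^{2/3}n^{2/3} + m + n^{3/2})$.

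For the second bound, when $\mathcal{C}$ is a family of genuine circles one additionally exploits that three points in general position determine a unique circle. Here I would follow the analysis of Aronov and Sharir \cite{AS02} (see also \cite{agarwal2004lenses, marcus2006intersection}): discard the points lying on at most two circles (which contribute $O(m)$ incidences), and then perform the recursive decomposition of the plane induced by a cutting of the circle arrangement, so that in each resulting cell one faces a smaller incidence problem controlled on one side by the three-degrees-of-freedom (K\H ov\'ari--S\'os--Tur\'an-type) estimate and on the other by the pseudo-segment bound from the previous paragraph; optimising the cutting parameter produces the extra term $m^{6/11}n^{9/11}$. In that derivation the only place where a super-constant factor entered the final incidence estimate is the cutting-number input, so plugging in our clean bound $\tau(\mathcal{C}) = O(n^{3/2})$ gives $I(\mathcal{C},\mathcal{P}) = O(m^{2/3}n^{2/3} + m^{6/11}n^{9/11} + m + n)$, with no logarithmic loss.

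The routine ingredients are the applications of the crossing-lemma incidence bound for pseudo-segments and the optimisation of the cutting parameter in the recursion. The one genuinely non-mechanical point --- and the main obstacle in writing this out --- is to audit the Aronov--Sharir argument and confirm that the slowly growing factor in their point--circle incidence bound originates \emph{entirely} from the $O(n^{3/2}\log n)$-type cutting bound available to them, so that Corollary~\ref{cor:cutting number} is precisely the input needed to remove it while leaving the rest of their proof (which is already polynomial) untouched.
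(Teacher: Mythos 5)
Your proposal is correct and matches the paper's (implicit) derivation: the paper simply cites the Agarwal et al.\ bound $I(\mathcal{C},\mathcal{P})=O(m^{2/3}n^{2/3}+m+n+\tau(\mathcal{C}))$ and the Aronov--Sharir circle refinement, and plugs in the improved cutting bound $\tau(\mathcal{C})=O(n^{3/2})$ from Corollary~\ref{cor:cutting number}. Your spelled-out version of the pseudo-segment reduction and your (correctly flagged, and correct) observation that the $\log$-type factor in the circle bound traces back solely to the cutting-number input are exactly what is meant by the reference to \cite{agarwal2004lenses,AS02,marcus2006intersection}.
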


\subsection{Number of tangencies between families of disjoint curves}
\label{subsec:tangencies}

Another application of our results concerns the following problem, introduced in \cite{PST12} and attributed to Pinchasi and Ben-Dan. What is the maximum number of tangencies between the members of two families, each of which consists of pairwise disjoint curves, where two curves are called tangent if they intersect exactly once, and in that intersection point they do not cross, but touch each other? Pinchasi and Ben-Dan showed that any upper bound for the maximum number of edges in an $n$-vertex topological graph without a self-crossing four-cycle gives an upper bound for this problem as well (see \cite{KP23} for the proof of this reduction). Therefore, the result of Marcus and Tardos \cite{marcus2006intersection} on the extremal number of a self-crossing four-cycle (Theorem~\ref{cor:MT C4}) gave an upper bound of $O(n^{3/2}\log n)$ for the maximum number of tangencies between the members of two families of disjoint curves. On the other hand, Keszegh and P\'alv\"olgyi \cite{KP23} gave a lower bound of $\Omega(n^{4/3})$ for this problem and noted that ``even getting rid of the $\log n$ factor from the upper bound would be an interesting improvement''. Our Corollary \ref{cor:crossing C4} does exactly that, using the reduction observed by Pinchasi and Ben-Dan.

For more geometric consequences of our results, we refer the reader to \cite{marcus2006intersection}.

\subsection{Edge-ordered graphs}
\label{edge-orderedapplication}

In this subsection, we prove two results about the extremal numbers of edge-ordered graphs. While the first result is independent of the subject of intersection-reverse sequences, the second one will rely on our Theorem~\ref{theorem_linear orders}.

A systematic study of the extremal numbers of edge-ordered graphs was initiated by Gerbner, Methuku, Nagy, P\'alv\"olgyi, Tardos and Vizer \cite{GMNPTV23}, although specific problems of similar kind had been considered much earlier (see, e.g., \cite{CK71}).
Formally, an edge-ordered graph is a finite simple graph $G = (V,E)$ with a linear order on its edge set $E$. An isomorphism between edge-ordered graphs must respect the edge-order. A subgraph of an edge-ordered graph is itself an edge-ordered graph with the induced edge-order. We say that the edge-ordered graph $G$ contains another edge-ordered graph $H$ if $H$ is isomorphic to a subgraph of $G$. Otherwise we say that $G$ avoids $H$. For a positive integer $n$ and an edge-ordered graph $H$, let the extremal number of $H$ be the maximal number of edges in an edge-ordered graph on $n$ vertices that avoids $H$, and let this maximum be denoted by $\ex_<(n,H)$. Note that trivially we have $\ex_<(n,H)\geq \ex(n,H)$, where, with a slight abuse of notation, we used $H$ for both an edge-ordered graph and for its underlying unordered graph.

Gerbner et al. \cite{GMNPTV23} defined the so-called \emph{order chromatic number} $\chi_{\textrm{or}}(H)$ of an edge-ordered graph $H$ and used it to establish a version of the celebrated Erd\H os--Stone--Simonovits theorem for edge-ordered graphs. For non-empty edge-ordered graphs $H$, this parameter takes values in $\mathbb{Z}_{\geq 2}\cup \{\infty\}$. Their result then states that $\ex_<(n,H)=\binom{n}{2}$ if $\chi_{\textrm{or}}(H)=\infty$ and $\ex_<(n,H)=\left(1-\frac{1}{\chi_{\textrm{or}}(H)-1}+o(1)\right)\binom{n}{2}$ otherwise.

Analogously to the case of classical (unordered) extremal graph theory, this shows that the most interesting case is where $\chi_{\textrm{or}}(H)=2$, as in that case the above result does not determine the asymptotics of $\ex_<(n,H)$. A simple but important dichotomy in classical extremal graph theory states that if $H$ is a forest, then $\ex(n,H)=O(n)$, whereas if $H$ contains a cycle, then $\ex(n,H)=\Omega(n^{1+\eps})$ for some $\eps>0$ which can depend on $H$. Using the inequality $\ex_<(n,H)\geq \ex(n,H)$, it is still true that if $H$ is an edge-ordered graph containing a cycle, then $\ex_<(n,H)=\Omega(n^{1+\eps})$ for some $\eps>0$, but there exist edge-ordered forests with order chromatic number greater than two, so they can have extremal number $\Theta(n^2)$. The natural analogues of unordered forests in this context are therefore forests with order chromatic number two, and the extremal numbers of these graphs are of great interest. Gerbner et al. \cite{GMNPTV23} studied the extremal numbers of certain short edge-ordered paths with order chromatic number two and showed that the extremal number can be $\Omega(n\log n)$. In the other direction, Kucheriya and Tardos \cite{kucheriya2023characterization} proved that for every edge-ordered forest $H$ with order chromatic number two, we have $\ex_<(n,H)\leq n2^{O(\sqrt{\log n})}$, and conjectured that the stronger bound $\ex_<(n,H)\leq n(\log n)^{O(1)}$ should hold. This was the edge-ordered analogue of a similar conjecture on \emph{vertex-ordered graphs}: Pach and Tardos \cite{PT06} conjectured that vertex-ordered forests of interval chromatic number two have extremal number $n(\log n)^{O(1)}$. Recently, Pettie and Tardos~\cite{pettie2024refutation} refuted the conjecture of Pach and Tardos. In this paper, building upon their result, we also disprove the conjecture of Kucheriya and Tardos, and completely settle this problem, by showing the following result.

\begin{theorem} \label{thm:tree construction}
    For any $C>0$, there exists an edge-ordered tree $H$ with order chromatic number two such that $\ex_<(n,H)= \Omega(n2^{C\sqrt{\log n}})$.
\end{theorem}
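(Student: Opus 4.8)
The plan is to reduce to the refutation of the Pach--Tardos conjecture by Pettie and Tardos. Their result provides, for every constant $C'>0$, a vertex-ordered forest $F$ of interval chromatic number $2$ whose extremal number as a vertex-ordered graph is $\Omega(n2^{C'\sqrt{\log n}})$. We may assume that $F$ is bipartite with its two colour classes $A$ and $B$ forming an initial and a final interval of the vertex order, that $F$ has no isolated vertices, and (joining the components by a few extra edges between $A$ and $B$, which only increases the vertex-ordered extremal number) that $F$ is a tree. Write $A=\{a_1<\dots<a_s\}$ and $B=\{b_1<\dots<b_t\}$. The goal is to manufacture from $F$ an edge-ordered tree $H$ with $\chi_{\mathrm{or}}(H)=2$ and $\ex_<(n,H)=\Omega(n2^{C'\sqrt{\log n}})$; since the reduction below loses only a constant factor, taking $C'$ slightly larger than $C$ then yields the theorem.

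The mechanism we use is the monotone path. If $p_1-p_2-\dots-p_k$ is a path whose edges increase in the edge-order, then in any edge-order-preserving embedding into an edge-ordered graph whose order is the lexicographic order ``by larger endpoint, then by smaller endpoint'' induced by a vertex order, the images of $p_1,p_3,p_5,\dots$ (and separately those of $p_2,p_4,\dots$) are forced to occur in the prescribed order. We build $H$ by threading the vertices $a_1,\dots,a_s$ (and, separately, $b_1,\dots,b_t$) along such monotone paths, inserting an auxiliary vertex between consecutive threaded vertices; the threading edges receive the smallest ranks in the edge-order of $H$, and the original edges of $F$ receive the largest ranks, ordered lexicographically by $(b\text{-endpoint},a\text{-endpoint})$. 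With this edge-order, $H$ embeds order-preservingly into the canonically edge-ordered complete bipartite graph, which forces $\chi_{\mathrm{or}}(H)\le 2$; as $H$ is non-empty, $\chi_{\mathrm{or}}(H)=2$.

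For the host, let $G$ be a vertex-ordered $F$-free graph on $n$ vertices with $\Omega(n2^{C'\sqrt{\log n}})$ edges; as usual we may take $G$ bipartite with colour classes $U<W$ that are intervals and with no order-preserving copy of $F$ sending $A$ into $U$ and $B$ into $W$. We decorate $G$ in the same manner: interleave the auxiliary vertices among $U$ and $W$ (multiplying the vertex count by only a constant factor), give the threading edges the smallest ranks, and order the edges of $G$ lexicographically by $(W\text{-endpoint},U\text{-endpoint})$; call the result $G^*$. If $G^*$ contained an edge-order-preserving copy of $H$, then the threading edges of $H$, carrying the smallest ranks, would be forced onto threading edges of $G^*$, pinning down the images of the $a_i$'s and $b_j$'s as increasing sequences in $U$ and in $W$ respectively; the remaining edges of $H$, which are precisely the edges of $F$, would then land on edges of $G$ and produce an order-preserving copy of $F$ in $G$ with $A$ mapped into $U$ and $B$ into $W$, a contradiction. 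Hence $\ex_<(O(n),H)\ge |E(G^*)|=\Omega(n2^{C'\sqrt{\log n}})$, which rearranges to the claimed bound.

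The main obstacle is tree-ness: a tree has essentially as few edges as vertices, so the order-recording gadget cannot be a heavy structure (one cannot, for instance, attach a single vertex joined to all of $A$), and in fact threading both colour classes while keeping $H$ a tree is automatic only when one colour class of $F$ consists of leaves, since otherwise the threading paths together with the edges of $F$ create a cycle. Making a single light gadget force both the $A$-order and the $B$-order for a general $F$ is therefore delicate, and may require exploiting additional structural features of Pettie and Tardos's forest, a subdivision of $F$ together with a fresh extremal lower bound for the subdivided pattern, or a direct adaptation of their construction in which the vertex order is replaced throughout by the lexicographic edge-order it induces. A secondary technical point is the verification that $\chi_{\mathrm{or}}(H)=2$ (rather than larger), which amounts to exhibiting the order-preserving embedding of $H$ into the canonically ordered complete bipartite graph.
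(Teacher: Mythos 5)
Your reduction has a genuine gap, and it is exactly the one you flag at the end: the threading gadget is incompatible with $H$ being a tree. If $F$ has $s+t$ vertices and (being a tree) $s+t-1$ edges, then adding two threading paths that visit all of $A$ and all of $B$ adds roughly $s+t$ further edges on roughly $s+t$ new auxiliary vertices, and the union of $F$ with two spanning threads necessarily contains a cycle unless $F$ is (close to) a union of stars threaded on one side only. Since producing a \emph{tree} $H$ is the whole content of the theorem (an edge-ordered graph with a cycle is not interesting here, and the matching upper bound of Kucheriya--Tardos is specifically about forests), "may require exploiting additional structural features" is precisely the missing step, not a technicality. There is also a secondary flaw in the extraction argument: an edge-order-preserving embedding of $H$ into $G^*$ only preserves \emph{relative} ranks, so the lowest-ranked edges of $H$ are not "forced onto threading edges of $G^*$"; they merely map to edges of $G^*$ below the images of the $F$-edges, and without further argument these could be ordinary edges of $G$, in which case the images of the $a_i$ and $b_j$ are not pinned down at all.

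The paper resolves both issues by dispensing with gadgets entirely. It works with the specific Pettie--Tardos patterns $S_t$ (a $3\times 2t$ matrix with two alternating rows and a middle row having $1$'s only in the first and last columns), for which the associated bipartite graph $G(S_t)$, edge-ordered \emph{lexicographically} by (column, row), is already a tree: two $t$-edge stars joined through a new vertex attached to one leaf of each. The key lemma shows that for any matrix $A$ containing the hat pattern, with no all-$0$ column and with every pair of consecutive rows sharing a $1$ in some column, one has $\ex_<(2n,G(A))\ge\Exx(n,A)$: the hat pattern rules out an embedding that swaps the two sides, the no-all-$0$-column condition forces the column order to be preserved, and the consecutive-rows condition forces the row order to be preserved, so any edge-order-preserving copy of $G(A)$ in $G(M)$ yields a copy of $A$ in $M$. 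In other words, the lexicographic edge-order of the pattern itself already rigidifies the embedding, which is what your threading paths were trying (and structurally failing) to do. If you want to salvage your approach, the realistic route is not to gadget-ify an arbitrary Pettie--Tardos forest but to check, as the paper does, that their concrete extremal patterns already have the rigidity and tree-ness properties needed.
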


This matches the upper bound $\ex_<(n,H)\leq n2^{O(\sqrt{\log n})}$ proved by Kucheriya and Tardos \cite{kucheriya2023characterization} for edge-ordered forests of order chromatic number two. Note that while our result completely settles the problem of how large the extremal number of an edge-ordered forest of order chromatic number two can be, the analogous question for vertex-ordered graphs is wide open. Indeed, while \cite{pettie2024refutation} proves the same lower bound for extremal functions of some vertex-ordered forests of interval chromatic number two, the upper bound analogous to the one proved for edge-ordered graphs in \cite{kucheriya2023characterization} is entirely missing (and seems to be hard) for vertex-ordered graphs. 

We know much less about the extremal numbers of edge-ordered graphs containing a cycle compared to that of edge-ordered forests. In fact, prior to our work, the only edge-ordered graphs $H$ for which the order of magnitude of $\ex_<(n,H)$ were known are some small forests and graphs with order chromatic number greater than two. In particular, there was no edge-ordered graph $H$ for which we knew that $\ex_<(n,H)=\Theta(n^{\alpha})$ for some $\alpha\in (1,2)$. A result of Gerbner et al. \cite{GMNPTV23} came close to showing the existence of such a graph. Let $C_4^{1243}$ be the four-cycle $abcd$ whose edges are ordered as $ab<bc<da<cd$. (We remark that there are two other edge-orderings of $C_4$, but they both have order chromatic number $\infty$, so their extremal number is $\binom{n}{2}$.) Gerbner et al. \cite{GMNPTV23} proved that $\ex_<(n,C_4^{1243})=O(n^{3/2}\log n)$, which comes close to the trivial lower bound $\Omega(n^{3/2})$. They asked whether $\ex_<(n,C_4^{1243})=\Theta(n^{3/2})$. We use our Theorem \ref{theorem_linear orders} to answer this question affirmatively.

\begin{theorem} \label{thm:ordered C4}
    $\ex_<(n,C_4^{1243})=\Theta(n^{3/2})$.
\end{theorem}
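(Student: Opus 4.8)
The lower bound is immediate: since the underlying graph of $C_4^{1243}$ is $C_4$, any $C_4$-free graph on $n$ vertices (with an arbitrary edge-ordering) avoids $C_4^{1243}$, and $\ex(n,C_4)=\Theta(n^{3/2})$. So the content is the upper bound, and the plan is to reduce it to Theorem \ref{theorem_linear orders}.

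Let $G$ be an $n$-vertex edge-ordered graph avoiding $C_4^{1243}$. The idea is to associate to each vertex $v$ a linear order $A^v$ on (a subset of) the neighbourhood of $v$, dictated by the edge-order, so that a forbidden pattern "three symbols in the same order in $A^u$ and $A^v$" forces a copy of $C_4^{1243}$. For a vertex $v$, order its incident edges by the given edge-order; this induces a linear order on $N(v)$, where $u$ precedes $w$ if the edge $uv$ is smaller than $vw$ in $G$'s edge-order. Write $A^v$ for this linear order on $N(v)$, so $|A^v|=\deg(v)$ and $\sum_v |A^v| = 2e(G)$.

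Now I claim no three symbols appear in the same order in two distinct $A^u$ and $A^v$. Suppose $x,y,z$ appear in this order in both $A^u$ and $A^v$; in particular $x,y,z\in N(u)\cap N(v)$, so in the bipartite-like picture we have two common neighbours. The subtlety is that we need a $4$-cycle, not a "theta" on three vertices, so I would use two of the three symbols together with $u$ and $v$: take the four-cycle on $u,y,v,z$ (say), whose four edges are $uy, yv, vz, zu$. The ordering of $A^u$ tells us how $uy$ and $uz$ compare; the ordering of $A^v$ tells us how $vy$ and $vz$ compare. One has to check that "$y$ before $z$ in $A^u$" and "$y$ before $z$ in $A^v$" translate, for the cycle $u\,y\,v\,z$, into exactly the edge-comparisons $uy<uz$ and $vy<vz$, which is the pattern $C_4^{1243}$ up to relabelling (recall $C_4^{1243}=abcd$ with $ab<bc<da<cd$, i.e. at vertex $a$ the edge $ab$ is smaller than the edge $ad$, and at vertex $c$ the edge $cb$ is smaller than the edge $cd$ — so the two "small at a common vertex" edges go to the same two opposite vertices). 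Thus two symbols suffice, and in fact any two of $x,y,z$ appearing in the same relative order in $A^u$ and $A^v$ already yields $C_4^{1243}$; a single shared ordered pair is enough.

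Given this, Theorem \ref{theorem_linear orders} applied to the orders $A^1,\dots,A^n$ (on the alphabet $V(G)$ of size $n$) yields $2e(G)=\sum_v|A^v|\le Cn^{3/2}$, completing the proof. The main obstacle — really the only non-routine point — is the case analysis in the previous paragraph: carefully matching the cyclic labelling of the four vertices to the definition of $C_4^{1243}$ so that a repeated ordered pair genuinely produces the forbidden edge-ordered $C_4$ and not one of the other two edge-orderings of $C_4$ (which have order chromatic number $\infty$). One should also double-check the edge case where the two common neighbours coincide with $u$ or $v$, or where $u$ and $v$ are themselves adjacent, but these do not interfere with the argument since we only ever use the two common neighbours and the vertices $u,v$.
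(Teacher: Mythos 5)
The overall strategy is correct and matches the paper's: build from each vertex $v$ the linear order $A^v$ on $N(v)$ induced by the edge-order, observe $\sum_v|A^v|=2e(G)$, and invoke Theorem~\ref{theorem_linear orders}. The lower bound is also fine. However, the central claim of your case analysis is false: knowing that $y$ precedes $z$ in both $A^u$ and $A^v$ (that is, $uy<_G uz$ and $vy<_G vz$) does \emph{not} force the four-cycle $u\,y\,v\,z$ to be a copy of $C_4^{1243}$. For instance, under the total edge-order $uy<_G vy<_G vz<_G uz$ your two hypotheses hold, yet the ranks around the cycle $u\,y\,v\,z$ read $(1,2,3,4)$, which is the monotone ordering $C_4^{1234}$ and not $C_4^{1243}$; the symmetric order $vy<_G uy<_G uz<_G vz$ gives $C_4^{1234}$ as well. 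The property ``$ab<ad$ at one vertex and $cb<cd$ at the opposite vertex'' is in fact satisfied by \emph{two} of the three edge-orderings of $C_4$, namely $C_4^{1243}$ and $C_4^{1234}$, so a single shared ordered pair only proves a bound for graphs avoiding both patterns, which is strictly weaker than the theorem.

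What separates the two outcomes is whether $uy$ versus $vy$ and $uz$ versus $vz$ compare the same way or opposite ways: if the comparisons agree (both $uy<vy$ and $uz<vz$, or both reversed), one checks that the cycle is $C_4^{1243}$; if they disagree, it is $C_4^{1234}$. This is exactly where the third common neighbour is needed, and this is the missing step. The paper's proof obtains $x_1,x_2,x_3$ in the same order in both $A^u$ and $A^v$ and then applies the pigeonhole principle to the three signs of $ux_i$ versus $vx_i$ ($i=1,2,3$) to find $i<j$ with matching signs; together with $ux_i<_G ux_j$ and $vx_i<_G vx_j$, this forces the cycle $u\,x_i\,v\,x_j$ to be $C_4^{1243}$. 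You need to add this pigeonhole step before invoking Theorem~\ref{theorem_linear orders}, and you should drop the assertion that two symbols suffice.
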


As mentioned above, this is the first edge-ordered graph of order chromatic number two which is not a forest and whose extremal number is known (up to a constant factor).

\medskip

\noindent \textbf{Organization of the paper.} In Section \ref{sec:linear orders}, we prove our main result (Theorem \ref{theorem_linear orders}) after giving a detailed outline of our proof. In Section~\ref{sec:edge-ordered graphs}, we prove Theorems~\ref{thm:tree construction} and \ref{thm:ordered C4}. Some concluding remarks and open problems are given in Section \ref{sec:concluding remarks}.

\section{Linear orders with restricted intersections} \label{sec:linear orders}

\subsection{Proof outline}
\label{subsec:proofoutline}

In this section we prove Theorem \ref{theorem_linear orders}. Before we turn to the formal proof, we outline our strategy and comment on some of the differences compared to the argument in \cite{marcus2006intersection}.

Throughout Section~\ref{sec:linear orders}, we will have a finite set of symbols $X$, and some linear orders $A^i$ defined on some subsets $X_i\subseteq X$. Equivalently, each $A^i$ is a non-repeating sequence in $X$ with image $X_i$, i.e., it is a permutation of $X_i$. With a slight abuse of notation, we will identify the sets $X_i$ with $A^i$, so we write $|A^i|$ for $|X_i|$, $|A^i\cap A^j|$ for $X_i\cap X_j$, and $a\in A^i$ for $a\in X_i$.

The starting idea behind the proof (which was already present in \cite{pinchasinumber}) is to define a quantity which can be easily bounded from below, but which can also be upper bounded efficiently provided that no two linear orders $A^i$ and $A^j$ have three common symbols appearing in the same order in the two linear orders. Roughly speaking, this quantity, denoted by $S$, is the sum of $\lambda_{a,b}$ over all pairs of distinct symbols $a$ and $b$, where $\lambda_{a,b}$ is the number of pairs $(A^i,A^j)$ which agree on the order of $a$ and $b$ minus the number of pairs $(A^i,A^j)$ which disagree on the order of $a$ and $b$. It is easy to show that $\lambda_{a,b}$ is close to being a perfect square, which implies that $\lambda_{a,b}$ cannot be very negative and hence the sum $S = \sum_{a,b} \lambda_{a,b}$ also cannot be very negative. This yields a lower bound on $S$.

Hence, our goal is to obtain a good enough upper bound on the sum $S$ using the fact that no two linear orders $A^i$ and $A^j$ have three common symbols appearing in the same order. To do this, it suffices to essentially `swap the order of summation', and argue that a typical pair $A^i$ and $A^j$ must disagree on the order of most pairs of symbols in their intersection and hence it must contribute very negatively to the sum $S = \sum_{a,b} \lambda_{a,b}$. Unfortunately, it turns out that there can be pairs $(A^i,A^j)$ whose contribution to $S$ is positive (though, importantly, it cannot be very large). On the other hand, if $A^i$ and $A^j$ have the stronger property that they disagree on the order of any two (rather than three) symbols (in which case, analogously to Definition \ref{defn:int rev}, we call $A^i$ and $A^j$ \emph{intersection-reverse}, or int-rev for short), then the contribution of $(A^i,A^j)$ to $S$ is entirely negative. 

A very important idea (versions of which were already used in \cite{pinchasinumber} and \cite{marcus2006intersection}) is to split each linear order $A^i$ into shorter subsequences and argue that many pairs of these short sequences $B$ and $B'$ are intersection-reverse. 
Any pair $(B,B')$ of these short sequences which are intersection-reverse will contribute roughly $-|B\cap B'|^2$ to the sum $S$, and the key task is to argue that the total contribution of short sequences which are intersection-reverse (i.e., $\sum_{(B,B') \textrm{ int-rev}} -|B\cap B'|^2$) will dominate the small positive contributions of the other pairs of short sequences. However, to do this, the previous papers~\cite{pinchasinumber,marcus2006intersection} needed to split the sequences $A^i$ into \emph{many} short subsequences. This means that, if each $A^i$ contains $\Theta(n^{1/2})$ symbols, then these short sequences will contain only $o(n^{1/2})$ symbols. So, typically, they will not even intersect at all, making a proof along these lines impossible. This is why in the previous papers \cite{pinchasinumber,marcus2006intersection}, the obtained upper bound for $\sum_i |A^i|$ is larger than the optimal bound $\Theta(n^{3/2})$.

In this paper, we will split each $A^i$ into only two subsequences $A^i_0$ and $A^i_1$ of roughly equal size, and our main innovation is a novel way to show that the (negative) contribution of the pairs of short sequences that are intersection-reverse to the sum $S$ is sufficiently large.
This new approach differs from the one in \cite{marcus2006intersection} at several places. One of the main differences is that while \cite{marcus2006intersection} lower bounds $\sum_{(B,B') \textrm{ int-rev}} |B\cap B'|^2$ by first lower bounding the linear quantity $\sum_{(B,B') \textrm{ int-rev}} |B\cap B'|$, we relate $\sum_{(B,B') \textrm{ int-rev}} |B\cap B'|^2$ to the quadratic expression $\sum_{i \not =j}|A_0^i\cap A_0^j||A_1^i\cap A_1^j|$, whose introduction is one of our key ideas (see Lemma~\ref{lemma_gainbound}). The main advantage of this new quadratic expression is that it can then be related to the sum $\sum_{i}|A_0^i||A_1^i|$ using a double counting argument (see Lemma \ref{lemma_cherrycounting}). Moreover, since $\sum_{i}|A_0^i||A_1^i| = \Omega(\sum_i |A^i|^2)$ (and this latter sum is large enough by Jensen's inequality), this yields the desired lower bound on $\sum_{(B,B') \textrm{ int-rev}} |B\cap B'|^2$, showing that indeed the total negative contribution of short intersection-reverse sequences to $S$ sufficiently dominates the small positive contributions of the other pairs of short sequences, resulting in the desired upper bound on $S$.

\subsection{Proof of Theorem \ref{theorem_linear orders}}

We now turn to the formal proof of Theorem \ref{theorem_linear orders}.
Given linear orders $A^1,\dots,A^n$ on some subsets of the set of symbols $\{a_1,\dots,a_{n'}\}$, we define the graph corresponding to these orders and symbols to be the bipartite graph with parts $\{A^1,\dots,A^n\}$ and $\{a_1,\dots,a_{n'}\}$ such that $a_i$ is joined to $A^j$ if and only if $a_i\in A^j$.

For a graph $G$ we write $\Delta(G)$ for the maximum degree over vertices in $G$ and $\delta(G)$ for the minimum degree. We say that a graph $G$ is $K$-almost-regular if $\Delta(G)\leq K\delta(G)$. The following lemma of Jiang and Seiver~\cite{jiang2012turan}, which is a modification of a result of Erdős and Simonovits~\cite{ErdosSimonovits}, will allow us to assume that the graph corresponding to our orders and symbols is $K$-almost-regular for some absolute constant $K$. Note that $e(G)$ denotes the number of edges in $G$.

\begin{lemma}[Jiang and Seiver~\cite{jiang2012turan}]\label{lemma_JiangSeiver}
    Let $\varepsilon, c$ be positive reals, where $\varepsilon<1$ and $c\geq 1$. Let $n$ be a positive integer that is sufficiently large as a function of $\varepsilon$. Let $G$ be an $n$-vertex graph with $e(G)\geq cn^{1+\varepsilon}$. Then, writing $K=20\cdot 2^{\frac{1}{\varepsilon^2}+1}$, $G$ contains a $K$-almost-regular subgraph $G'$ on $m\geq n^{\frac{\varepsilon}{2}\frac{1-\varepsilon}{1+\varepsilon}}$ vertices such that $e(G')\geq \frac{2c}{5}m^{1+\varepsilon}$.
\end{lemma}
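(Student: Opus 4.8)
The plan is to prove this by the classical iterated-regularization method of Erd\H os and Simonovits, carried out (as by Jiang and Seiver) with explicit control of all constants. One repeatedly applies two cleaning moves to a graph $H$, starting from $H=G$, while keeping track of the number of vertices and of the edge-density constant.

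\textbf{Move 1 (prune sparse vertices).} Given $H$ with $N$ vertices and $e(H)\ge c'N^{1+\varepsilon}$, repeatedly delete any vertex whose degree is below a fixed (small, $\varepsilon$-dependent) multiple of the current average degree $2e(H)/N$. A short convexity computation, exploiting that $x\mapsto x^{1+\varepsilon}$ is convex, shows that each such deletion keeps the inequality $e(\cdot)\ge c'|V(\cdot)|^{1+\varepsilon}$ intact, so the density constant never drops. When no further deletion is possible we obtain an induced subgraph $H_0$ with $e(H_0)\ge c\,|V(H_0)|^{1+\varepsilon}$ and $\delta(H_0)=\Omega_{\varepsilon}\bigl(e(H_0)/|V(H_0)|\bigr)=\Omega_{\varepsilon}\bigl(c\,|V(H_0)|^{\varepsilon}\bigr)$. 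Consequently $\Delta(H_0)/\delta(H_0)=O_{\varepsilon,c}\bigl(|V(H_0)|^{1-\varepsilon}\bigr)$; if this ratio is at most $K$ we stop, and otherwise the degrees of $H_0$ lie in at most $L_0=(1-\varepsilon)\log_2|V(H_0)|+O_{\varepsilon,c}(1)$ consecutive dyadic intervals.

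\textbf{Move 2 (pass to a dyadic degree band).} Partition $V(H_0)$ into its dyadic degree classes, pass to the induced subgraph on a carefully chosen band of consecutive classes that still carries a substantial share of the edges, and then re-run Move~1; call the result $H_1$. Because all surviving degrees now lie in a bounded window, $H_1$ is much flatter: its degrees occupy only $L_1=O(\log L_0)$ dyadic classes. Iterating Moves~1 and~2, the parameter $L_i$ obeys a recursion roughly of the shape $L_{i+1}\le 2\log_2 L_i+O(1)$, so after a number of rounds that is finite (though it grows slowly with $n$) the degree ratio drops to at most $K$ and we halt, outputting $G'$.

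I expect the main obstacle to lie in the quantitative bookkeeping required to obtain precisely the stated parameters $K=20\cdot 2^{1/\varepsilon^2+1}$, $m\ge n^{\frac{\varepsilon}{2}\cdot\frac{1-\varepsilon}{1+\varepsilon}}$ and $e(G')\ge\frac{2c}{5}m^{1+\varepsilon}$. The subtle point is the choice of band in Move~2: it must be made so that the edge count is reduced by only a controlled factor, and so that these reductions, accumulated over all rounds, can be absorbed into the single factor $2/5$, while simultaneously the vertex count does not fall below the stated polynomial in $n$ — for the latter one combines $e(H)\le\binom{|V(H)|}{2}$ with the density lower bound that is maintained throughout. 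Extracting the exact value of $K$ then reduces to bounding how much a single Move~2 step can expand $\Delta/\delta$ and stopping the iteration the instant the ratio dips below $K$ rather than driving it lower. Reproducing these estimates with full care for the constants is the technical core of the argument, and is exactly what Jiang and Seiver do.
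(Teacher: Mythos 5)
First, note that the paper does not prove Lemma~\ref{lemma_JiangSeiver} at all: it is quoted from Jiang and Seiver (itself a variant of Erd\H os--Simonovits), so there is no in-paper argument to compare yours against. Judged on its own, your proposal has a genuine gap: it is a plan whose central step is both unjustified and, as described, unlikely to produce the stated constants. The problem is Move~2. Restricting to a dyadic degree band controls each vertex's degree \emph{in the ambient graph}, not in the induced subgraph on the band; after you re-run Move~1, the new minimum degree is tied to the new (much smaller) average degree, and you give no argument that the maximum degree of the surviving graph has come down relative to it. The asserted recursion $L_{i+1}\le 2\log_2 L_i+O(1)$ is therefore unsupported. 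Worse, selecting a band that ``still carries a substantial share of the edges'' costs at least a factor polylogarithmic in the current order (there are $\Theta(\log N)$ classes and an edge may join two of them), and your iteration runs for a superconstant number of rounds; these multiplicative losses cannot all be absorbed into the single fixed factor $2/5$. It is not even clear that the density parameter $e(H)/|V(H)|^{1+\varepsilon}$ survives one such step: if the chosen band contains almost all vertices, the vertex count does not shrink to compensate for the lost edges.

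Your closing claim that this ``is exactly what Jiang and Seiver do'' is not accurate, and the actual argument shows how the difficulty is avoided. One proves a dichotomy: either a constant fraction of the vertices have degree within an $O_{\varepsilon}(1)$ factor of the average, in which case the subgraph they induce is (after one pruning step) the desired $K$-almost-regular $G'$ and the only constant-factor loss --- the $2/5$ --- is incurred here, exactly once; or some subgraph on half the vertices has density parameter $e(H)/|V(H)|^{1+\varepsilon}$ larger than that of the current graph by a fixed factor of the form $2^{\Theta(\varepsilon^2)}$ (this is where $K=20\cdot 2^{1/\varepsilon^2+1}$ comes from). In the second case one recurses on that half-order subgraph; since the density parameter never decreases along the iteration and is trivially at most $|V(H)|^{1-\varepsilon}$ because $e(H)\le\binom{|V(H)|}{2}$, the number of halvings is bounded, which yields $m\ge n^{\frac{\varepsilon}{2}\frac{1-\varepsilon}{1+\varepsilon}}$. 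In this scheme there is no accumulation of edge losses to control. To make your write-up into a proof you would need to replace Move~2 by this densest-half-subgraph dichotomy (or genuinely justify your recursion and the summability of your losses); as it stands, the proposal defers precisely the step on which the lemma lives.
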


Given $n$ sufficiently large, consider the (bipartite) graph $G$ corresponding to the linear orders $A^1,\dots,A^n$ and the $n$ symbols (as in the statement of Theorem~\ref{theorem_linear orders}), and apply Lemma~\ref{lemma_JiangSeiver}. The resulting (bipartite) subgraph $G'$ is the graph corresponding to some subsequences of some of the linear orders $A^i$, and a subset (of our original set of $n$ symbols) consisting of $n'$ symbols. Thus, to prove Theorem~\ref{theorem_linear orders}, it suffices to prove the following result.
	
\begin{theorem}\label{theorem_almostregular}
    For every $K>0$, there exists $C>0$ such that the following holds. Assume that $A^1,\dots,A^n$ are linear orders on some subsets of a set of $n'$ symbols such that no three symbols appear in the same order in any two distinct linear orders $A^i$ and $A^j$. Assume, furthermore, that the graph $G$ corresponding to these linear orders and symbols is $K$-almost-regular. Then $\sum_i |A^i|\leq Cn^{3/2}$.
\end{theorem}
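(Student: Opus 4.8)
The plan is to run the ``positive-quantity'' argument sketched in Section~\ref{subsec:proofoutline}, with the new quadratic invariant doing the real work. Since we are proving Theorem~\ref{theorem_almostregular}, I would first exploit $K$-almost-regularity: writing $d=\delta(G)$, every order has $|A^i|\in[d,Kd]$ and every symbol has degree in $[d,Kd]$, so from $e(G)=\sum_i|A^i|=\sum_a\deg(a)$ we get $n/K\le n'\le Kn$, i.e.\ $n'=\Theta(n)$---a relation I will use repeatedly. I would argue by contradiction: assume $\sum_i|A^i|>Cn^{3/2}$ for a large constant $C=C(K)$, so that $d$ exceeds any prescribed multiple of $\sqrt n$. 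For distinct symbols $a,b$, among the orders containing both let $p_{ab},q_{ab}$ count those in which $a$ precedes, resp.\ follows, $b$; set $\lambda_{a,b}=(p_{ab}-q_{ab})^2-(p_{ab}+q_{ab})$ and $S=\sum_{\{a,b\}}\lambda_{a,b}$. Since $(p-q)^2\ge 0$ and $\sum_{\{a,b\}}(p_{ab}+q_{ab})=\sum_i\binom{|A^i|}{2}$, this gives the easy lower bound $S\ge -\sum_i\binom{|A^i|}{2}$. Swapping the order of summation, $S=\sum_{i\ne j}c_{ij}$, where $c_{ij}$ counts pairs of symbols appearing in the same relative order in both $A^i,A^j$ minus those appearing in opposite order.

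The bulk of the work is an upper bound on $S$. The hypothesis forces the comparison permutation of $A^i\cap A^j$ to have no increasing subsequence of length $3$, so $A^i\cap A^j$ splits into at most two parts each appearing reversed in $A^i$ and $A^j$; hence $c_{ij}\le\tfrac12|A^i\cap A^j|$ in general, and $c_{ij}=-\binom{|A^i\cap A^j|}{2}$ when $A^i,A^j$ are int-rev. The crude bound alone only gives $S=O(nd^2)$, no better than the lower bound, so I would split $A^i=A^i_0A^i_1$ into balanced halves and, for each $i\ne j$, group the pairs counted by $c_{ij}$ according to which of the blocks $W^{ij}_{st}=A^i_s\cap A^j_t$ ($s,t\in\{0,1\}$) contains its two symbols. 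Writing $w_{st}=|W^{ij}_{st}|$: a pair split between $W_{00}$ and $W_{11}$ always agrees (total contribution $+w_{00}w_{11}$), one between $W_{01}$ and $W_{10}$ always disagrees ($-w_{01}w_{10}$), the within-block contribution is $\le\tfrac12 w_{st}$ or $-\binom{w_{st}}{2}$ if $(A^i_s,A^j_t)$ is int-rev, and the four mixed cross-block terms are a priori bounded only by $\pm w_{st}w_{s't'}$. A key structural fact is that for every $i\ne j$, at least one of $(A^i_0,A^j_0),(A^i_1,A^j_1)$ is int-rev: otherwise a common increasing pair $a,b$ inside $W_{00}$ and a common increasing pair $c,d$ inside $W_{11}$ (four distinct symbols) would, since $A^i_0$ precedes $A^i_1$ and $A^j_0$ precedes $A^j_1$, appear in the same order $a,b,c,d$ in both $A^i$ and $A^j$, a forbidden common increasing triple. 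This is what keeps the $+w_{00}w_{11}$ term under control, and making it quantitative is the content of Lemma~\ref{lemma_gainbound}: the total ``gain'' from int-rev half-pairs, of order $\sum_{i\ne j}\sum_{(A^i_s,A^j_t)\,\text{int-rev}}w_{st}^2$, should dominate $\sum_{i\ne j}w^{ij}_{00}w^{ij}_{11}$ up to lower-order error, giving
\[
  S\le(\text{positive error})-\Omega\!\Bigl(\sum_{i\ne j}|A^i_0\cap A^j_0|\,|A^i_1\cap A^j_1|\Bigr).
\]

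It then remains to show the quadratic expression on the right is large---this is Lemma~\ref{lemma_cherrycounting}. Writing $f(a,b)=|\{i:a\in A^i_0,\ b\in A^i_1\}|$, a double count gives $\sum_{i\ne j}|A^i_0\cap A^j_0||A^i_1\cap A^j_1|=\sum_{a,b}f(a,b)^2-\sum_i|A^i_0||A^i_1|$, and since $f$ is supported on at most $(n')^2$ pairs, Cauchy--Schwarz yields $\sum_{a,b}f(a,b)^2\ge\bigl(\sum_i|A^i_0||A^i_1|\bigr)^2/(n')^2$. The balanced split gives $\sum_i|A^i_0||A^i_1|\ge\tfrac14\sum_i|A^i|^2-\tfrac n4$, Jensen gives $\sum_i|A^i|^2\ge(\sum_i|A^i|)^2/n$, and $n'=\Theta(n)$; together these make $\sum_{i\ne j}|A^i_0\cap A^j_0||A^i_1\cap A^j_1|=\Omega\bigl((\sum_i|A^i|^2)^2/n^2\bigr)$. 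Feeding this into the displayed inequality and using $S\ge-\tfrac12\sum_i|A^i|^2$ gives $(\sum_i|A^i|^2)^2/n^2=O(\sum_i|A^i|^2)+O(\text{error})$; once the error is $O(\sum_i|A^i|^2)$ this forces $\sum_i|A^i|^2=O(n^2)$, hence $\sum_i|A^i|\le\sqrt{n\sum_i|A^i|^2}=O(n^{3/2})$, contradicting the hypothesis once $C$ is large enough.

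The routine parts are the invariant $S$ with its lower bound, the change of summation, the ``two reversed parts'' fact, and the Cauchy--Schwarz/Jensen chain of Lemma~\ref{lemma_cherrycounting}. The hard part will be Lemma~\ref{lemma_gainbound}: one must show that the int-rev gains genuinely beat \emph{both} the positive $\sum_{i\ne j}w^{ij}_{00}w^{ij}_{11}$ term \emph{and} all of the mixed cross-block terms. The structural fact above handles the case where the larger of $W_{00},W_{11}$ sits in an int-rev block; the opposite case, and---more seriously---the mixed cross-block sums such as $\sum_{i\ne j}|A^i_0\cap A^j_0||A^i_0\cap A^j_1|$, whose naive bound (of order $nd^3$, a factor $\sim d$ too large to be absorbed into $O(\sum_i|A^i|^2)$) is useless, will require extracting genuine cancellation---presumably by choosing the two reversed parts of each $A^i\cap A^j$ compatibly with the halves rather than treating the four blocks independently. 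This is precisely the step where the paper's new idea must do its work.
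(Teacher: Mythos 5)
Your proposal assembles most of the right ingredients (the balanced halving, the Dilworth/two-chain bound, the quadratic invariant $\sum_{i\ne j}|A^i_0\cap A^j_0||A^i_1\cap A^j_1|$ and its double-count-plus-Jensen lower bound, which all match the paper), but it has a genuine gap exactly where you flag one, and the gap is caused by your choice of invariant. You define $S$ via the full orders $A^i$, so that $c_{ij}$ sums over all pairs of symbols in $A^i\cap A^j$; after halving, this unavoidably produces the positive term $+w_{00}w_{11}$ and the four mixed cross-block sums such as $\sum_{i\ne j}|A^i_0\cap A^j_0||A^i_0\cap A^j_1|$, which, as you correctly observe, are of order $nd^3$ and cannot be absorbed. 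No amount of ``choosing the two reversed parts compatibly with the halves'' is needed, because the paper simply never incurs these terms: its invariant is
$S=\sum_{i\ne j}\sum_{\epsilon\in\{0,1\}}\sum_{a\ne b} f(A^i_\epsilon,A^j_\epsilon,a,b)$,
i.e.\ the agreement/disagreement count is taken \emph{only} over pairs of symbols lying in the same-index halves $A^i_\epsilon\cap A^j_\epsilon$. The sum-of-squares lower bound $S\ge-\frac12\sum_i|A^i|^2$ survives verbatim for this restricted $S$ (one completes the square separately for each triple $(a,b,\epsilon)$), while the cross-block and split-pair contributions are excluded by definition. This redefinition is the missing idea, not a technicality.

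A second, smaller shortfall: your structural fact (``at least one of $(A^i_0,A^j_0),(A^i_1,A^j_1)$ is int-rev'') is weaker than what is true and what is needed. If $(A^i_\epsilon,A^j_\epsilon)$ is \emph{not} int-rev, witnessed by a common increasing pair $a,b$, then any symbol $c\in A^i_{1-\epsilon}\cap A^j_{1-\epsilon}$ yields a common increasing \emph{triple} ($a,b,c$ or $c,a,b$), so in fact $A^i_{1-\epsilon}\cap A^j_{1-\epsilon}=\emptyset$. This disjointness is exactly what closes the case you call ``the opposite case'' (non-int-rev block carrying the larger intersection): there the product $|A^i_0\cap A^j_0||A^i_1\cap A^j_1|$ is zero, and when both diagonal pairs are int-rev the AM--GM inequality $w_{00}^2+w_{11}^2\ge 2w_{00}w_{11}$ finishes Lemma~\ref{lemma_gainbound}. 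With the restricted $S$ and this strengthened claim, the rest of your outline (Lemma~\ref{lemma_cherrycounting}, the bound $\sum_{i\ne j}|A^i\cap A^j|\le K^2M^2/n$ from almost-regularity, and the final comparison) goes through as you describe.
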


Note that if the conditions in Theorem~\ref{theorem_almostregular} hold, then we have $\delta(G)n'\leq e(G) \leq \Delta(G)n$. Hence, $n'\leq Kn$ since $G$ is $K$-almost-regular. Similarly, $\delta(G)n \leq e(G) \leq \Delta(G)n'$, so we have $n\leq Kn'$. In the rest of this section, we prove Theorem~\ref{theorem_almostregular}.

Now we recall some setup (with some modifications we will need) from~\cite{marcus2006intersection}. In what follows, we will always assume that $A^1,\dots,A^n$ are linear orders such that no three symbols appear in the same order in any two distinct linear orders $A^i$, $A^j$. Moreover, we will often view these linear orders as non-repeating sequences, as mentioned earlier.
    For each $i\in[n]$, we divide the sequence $A^i$ into two subsequences $A^i_0, A^i_1$ of lengths $\left\lceil |A^i|/2\right\rceil$ and $\left\lfloor |A^i|/2\right\rfloor$ respectively (so that $A^i$ is the concatenation of $A^i_0$ and $A^i_1$). Given a sequence $B$ and two distinct symbols $a,b\in[n']$, we define
    \begin{equation*}
        f(B,a,b)=
        \begin{cases}
            0 & \text{if $a\not \in B$ or $b\not \in B$,}\\
            1 & \text{if $a$ precedes $b$ in $B$,}\\
            -1 & \text{if $b$ precedes $a$ in $B$}.
        \end{cases}
    \end{equation*}
    Moreover, given two sequences $B,B'$ and distinct symbols $a,b\in[n']$, we define
    \begin{equation*}
        f(B,B',a,b)=f(B,a,b)f(B',a,b).
    \end{equation*}
    In other words, $f(B,B',a,b)$ is $1$ if $a,b$ appear in the same order in $B$ and $B'$, and $-1$ if they appear in reverse order (and $0$ if $a$ and $b$ are not both contained in $B$ and $B'$).

Our proof relies on obtaining both a lower bound and an upper bound for the quantity
\begin{equation*}
    S=\sum_{i\not =j}\sum_{\epsilon\in\{0,1\}}\sum_{a\not =b} f(A^i_\epsilon, A^j_{\epsilon},a,b).
\end{equation*}

\subsubsection{Lower bounding $S$} 

First observe that $S$ cannot be very negative, because it is very close to being a sum of perfect squares.
\begin{lemma}[See also~\cite{marcus2006intersection}]\label{lemma_Slowerbound}
    $S\geq -\frac{1}{2}\sum_i|A^i|^2$.
\end{lemma}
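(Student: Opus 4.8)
The plan is to re-index the defining triple sum for $S$ by swapping the order of summation, putting the sum over the pair of distinct symbols $a,b$ outside. For a fixed choice of $\epsilon\in\{0,1\}$ and a fixed unordered pair $\{a,b\}$, consider the quantity
\begin{equation*}
    \lambda_{a,b}^\epsilon=\sum_{i\neq j} f(A^i_\epsilon,A^j_\epsilon,a,b).
\end{equation*}
The key observation is that this is almost a perfect square. Indeed, for each $i$ let $x_i=f(A^i_\epsilon,a,b)\in\{-1,0,1\}$; then $\sum_{i\neq j} f(A^i_\epsilon,A^j_\epsilon,a,b)=\sum_{i\neq j} x_i x_j=\left(\sum_i x_i\right)^2-\sum_i x_i^2$. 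Since each $x_i^2\in\{0,1\}$ and $x_i^2=1$ exactly when both $a$ and $b$ lie in $A^i_\epsilon$, we get
\begin{equation*}
    \lambda_{a,b}^\epsilon=\left(\sum_i x_i\right)^2-\#\{i: a,b\in A^i_\epsilon\}\geq -\#\{i: a,b\in A^i_\epsilon\}.
\end{equation*}

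Summing this bound over both values of $\epsilon$ and over all unordered pairs $\{a,b\}$ of distinct symbols gives
\begin{equation*}
    S=\sum_{\epsilon\in\{0,1\}}\sum_{\{a,b\}} 2\lambda_{a,b}^\epsilon \;\geq\; -2\sum_{\epsilon\in\{0,1\}}\sum_{\{a,b\}} \#\{i: a,b\in A^i_\epsilon\},
\end{equation*}
where the factor $2$ accounts for the fact that the original sum is over ordered pairs $(a,b)$. Now I would swap the order of summation once more on the right-hand side: for fixed $i$ and $\epsilon$, the number of unordered pairs $\{a,b\}$ with both $a,b\in A^i_\epsilon$ is exactly $\binom{|A^i_\epsilon|}{2}$. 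Hence the right-hand side equals $-2\sum_i\left(\binom{|A^i_0|}{2}+\binom{|A^i_1|}{2}\right)$. Using $\binom{m}{2}\leq m^2/2$ and $|A^i_0|^2+|A^i_1|^2\leq (|A^i_0|+|A^i_1|)^2=|A^i|^2$ (which holds since $|A^i_0|+|A^i_1|=|A^i|$ and cross terms are nonnegative), we obtain $\binom{|A^i_0|}{2}+\binom{|A^i_1|}{2}\leq \tfrac14|A^i|^2$, so $S\geq -\tfrac12\sum_i|A^i|^2$, as claimed.

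There is no real obstacle here; the only thing to be careful about is the bookkeeping of ordered versus unordered pairs of symbols and the elementary inequality $\binom{|A^i_0|}{2}+\binom{|A^i_1|}{2}\leq |A^i|^2/4$. One could alternatively avoid splitting into the halves $A^i_0,A^i_1$ by noting the same argument applies verbatim to any fixed family of sequences, but keeping $\epsilon$ in the sum matches the definition of $S$ used later in the paper, so I would present it that way.
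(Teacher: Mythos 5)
Your proof is correct and takes essentially the same route as the paper's: swap the order of summation, recognise $\sum_{i\neq j}x_ix_j$ as $\left(\sum_i x_i\right)^2-\sum_i x_i^2$, drop the square, and count the remaining diagonal terms as $\binom{|A^i_\epsilon|}{2}$ per sequence. One small bookkeeping slip: the justification you give for $\binom{|A^i_0|}{2}+\binom{|A^i_1|}{2}\leq\frac14|A^i|^2$ (namely $\binom{m}{2}\leq m^2/2$ together with $|A^i_0|^2+|A^i_1|^2\leq(|A^i_0|+|A^i_1|)^2$) only yields $\frac12|A^i|^2$; the stated $\frac14|A^i|^2$ is nevertheless true, but it uses that $|A^i_0|$ and $|A^i_1|$ differ by at most one (so $|A^i_0|^2+|A^i_1|^2\leq\frac{|A^i|^2+1}{2}$, and the $-|A^i_\epsilon|$ terms in the binomial coefficients absorb the $+1$) --- in any case the constant is immaterial for the application.
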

\begin{proof}[Proof of Lemma~\ref{lemma_Slowerbound}]
    Note that
    \begin{align*}
        S&=\sum_{i\not =j}\sum_{\epsilon\in\{0,1\}}\sum_{a\not =b} f(A^i_\epsilon, A^j_{\epsilon},a,b)\\
        &=\sum_{\substack{a\not =b,\\\epsilon\in\{0,1\}}}\sum_{i\not =j} f(A^i_\epsilon,a,b)f(A^j_{\epsilon},a,b) \\ 
        &=\sum_{\substack{a\not =b,\\\epsilon\in\{0,1\}}}\left(\left(\sum_{i} f(A^i_\epsilon,a,b)\right)^2-\sum_{i} f(A^i_\epsilon,a,b)^2\right)\\
        &\geq \sum_{\substack{a\not =b,\\\epsilon\in\{0,1\}}}\left(-\sum_{i} f(A^i_\epsilon,a,b)^2\right)\\
        &=-\sum_{\substack{i,\\ \epsilon\in\{0,1\}}}2\binom{|A^i_{\epsilon}|}{2}\\
        &\geq-\frac{1}{2}\sum_i|A^i|^2,
    \end{align*}
    as desired.
\end{proof}

\subsubsection{Upper bounding $S$ in terms of the sum of squares of 
intersection sizes of intersection-reverse pairs}

    In the rest of the proof, we will obtain an upper bound on $S$, which will contradict the lower bound obtained in Lemma~\ref{lemma_Slowerbound}. Given two sequences $B$ and $B'$, we say that $B$, $B'$ are \emph{intersection-reverse}, or \emph{int-rev} for short, if their common elements appear in reverse order in the two sequences.
    
    Using Dilworth's theorem and similar reasoning as in \cite{pinchasinumber,marcus2006intersection}, we prove the following bound on $\sum_{a\not =b} f(A^i_{\epsilon}, A^j_{\epsilon}, a,b)$. Here, crucially, we obtain a small `gain' if $A_\epsilon^i$ and $A_\epsilon^j$ are intersection-reverse compared to the case when they are not.
    
\begin{lemma}\label{lemma_fbounds}
    Let $\epsilon \in \{0, 1\}$. For all pairs $A_\epsilon^i$, $A_\epsilon^j$ ($i\ne j$), we have 
    \begin{equation*}\label{equation_singular}
        \sum_{a\not =b} f(A^i_{\epsilon}, A^j_{\epsilon}, a,b)\leq |A^i_{\epsilon}\cap A^j_{\epsilon}|.
    \end{equation*}
    Moreover, for all intersection-reverse pairs $A^i_{\epsilon}$, $A^j_{\epsilon}$, we have
    \begin{equation*}\label{equation_reverse}
        \sum_{a\not =b} f(A^i_{\epsilon}, A^j_{\epsilon}, a,b)= |A^i_{\epsilon}\cap A^j_{\epsilon}|-|A^i_{\epsilon}\cap A^j_{\epsilon}|^2.
    \end{equation*}
\end{lemma}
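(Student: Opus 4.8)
The plan is to reduce both parts of the lemma to a statement about a single permutation. Fix $\epsilon\in\{0,1\}$ and $i\neq j$, and write $X=A^i_\epsilon\cap A^j_\epsilon$ and $m=|X|$. Only unordered pairs $\{a,b\}$ with $a,b\in X$ contribute to $\sum_{a\neq b}f(A^i_\epsilon,A^j_\epsilon,a,b)$, and since $f(B,B',a,b)=f(B,B',b,a)$, each such unordered pair contributes exactly $2f(A^i_\epsilon,A^j_\epsilon,a,b)\in\{2,-2\}$. Hence, writing $P$ for the number of unordered pairs of $X$ that appear in the same order in $A^i_\epsilon$ and $A^j_\epsilon$, and $Q=\binom{m}{2}-P$ for the number that appear in reverse order, we obtain
\[
\sum_{a\neq b}f(A^i_\epsilon,A^j_\epsilon,a,b)=2(P-Q)=4P-2\binom{m}{2}=4P-m^2+m.
\]
The equality statement is then immediate: if $A^i_\epsilon$ and $A^j_\epsilon$ are intersection-reverse then every pair of $X$ appears in reverse order, so $P=0$ and the sum equals $m-m^2=|A^i_\epsilon\cap A^j_\epsilon|-|A^i_\epsilon\cap A^j_\epsilon|^2$.

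For the first (inequality) statement it therefore suffices to prove $P\le m^2/4$, since then $4P-m^2+m\le m=|A^i_\epsilon\cap A^j_\epsilon|$. To this end, list the symbols of $X$ in the order in which they occur in $A^i_\epsilon$; then $A^j_\epsilon$ induces a permutation $\pi$ of $X$. The assumption that no three symbols appear in the same order in $A^i$ and $A^j$, applied to triples contained in $X$ and using that $A^i_\epsilon$ and $A^j_\epsilon$ are subsequences of $A^i$ and $A^j$, says precisely that $\pi$ has no increasing subsequence of length $3$.

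By Dilworth's theorem (in its dual form: the minimum number of decreasing subsequences needed to cover a permutation equals the length of its longest increasing subsequence), $X$ can be partitioned into two parts $I_1,I_2$, each of which forms a decreasing subsequence of $\pi$; that is, within each $I_t$ every two symbols appear in reverse order in $A^i_\epsilon$ and $A^j_\epsilon$. Consequently, every pair counted by $P$ has its two elements lying in different parts, so $P\le|I_1|\cdot|I_2|\le\lfloor m/2\rfloor\lceil m/2\rceil\le m^2/4$, which is exactly what we need.

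I do not expect a genuine obstacle in this lemma: the mathematical content is just the Dilworth/Mirsky decomposition of a permutation with no length-$3$ increasing subsequence into two decreasing subsequences, which is standard. The only points requiring care are the bookkeeping with ordered versus unordered pairs (and the resulting factor of $2$), the verification that the ``no three in the same order'' hypothesis descends from $A^i,A^j$ to the subsequences $A^i_\epsilon,A^j_\epsilon$ and hence constrains $\pi$, and checking the degenerate cases $m\in\{0,1\}$ where the sum is $0$ and both claimed bounds hold trivially.
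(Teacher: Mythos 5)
Your proof is correct and takes essentially the same route as the paper: both note the ``no three in the same order'' condition descends to the halves, apply (dual) Dilworth to split $A^i_\epsilon\cap A^j_\epsilon$ into two subsets on which $A^i_\epsilon$ and $A^j_\epsilon$ are reversely ordered, and then bound the contribution by a short computation with the part sizes. The only cosmetic difference is bookkeeping: you track the count $P$ of same-order unordered pairs and bound $P\le m^2/4$, while the paper writes the sum directly as $2|T_1||T_2|-|T_1|(|T_1|-1)-|T_2|(|T_2|-1)$ and bounds it by $|T_1|+|T_2|$; these are the same estimate.
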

\begin{proof}
Let $k=|A^i_{\epsilon}\cap A^j_{\epsilon}|$.
    (First, we make no assumption about $A^i_{\epsilon}, A^j_{\epsilon}$ being intersection-reverse.) Since no three symbols can appear in the same order in $A^i$, $A^j$, it follows that $A^i_{\epsilon}, A^j_{\epsilon}$ also do not contain three symbols in the same order. Hence, using (the dual of) Dilworth's theorem~\cite{dilworth1950decomposition} (applied to the poset defined on $A^i_{\epsilon}\cap A^j_{\epsilon}$ in which $a<b$ if $a$ precedes $b$ in both $A^i_{\epsilon}$ and $A^j_{\epsilon}$), we obtain that $A^i_{\epsilon}\cap A^j_{\epsilon}$ can be partitioned into two subsets, each of which are ordered reversely in $A^i_{\epsilon}$ and $A^j_{\epsilon}$. That is, $A^i_{\epsilon}\cap A^j_{\epsilon}=T_1\cup T_2$ such that for each $\ell\in\{1,2\}$ and distinct $a,b\in T_\ell$, we have $f(A^i_{\epsilon}, A^j_{\epsilon},a,b)=-1$. Hence, 
   \begin{align*}
       \sum_{a\not =b} f(A^i_{\epsilon}, A^j_{\epsilon}, a,b)      &\leq 2|T_1||T_2|-|T_1|(|T_1|-1)-|T_2|(|T_2|-1)\\
       &\leq |T_1|+|T_2|=k,
   \end{align*} 
   as claimed.

   On the other hand, if $A^i_{\epsilon}, A^j_{\epsilon}$ are intersection-reverse, then $f(A^i_{\epsilon}, A^j_{\epsilon},a,b)=-1$ for all distinct $a,b\in A^i_\epsilon\cap A^j_\epsilon$, and hence 
   $$\sum_{a\not =b} f(A^i_{\epsilon}, A^j_{\epsilon}, a,b)=-2\binom{k}{2}=k-k^2,$$
   as claimed. This proves the lemma.
\end{proof}

Applying Lemma~\ref{lemma_fbounds} for each pair $A^i_\epsilon, A^j_{\epsilon}$ (with $\epsilon \in \{0, 1\}$), and adding up the resulting bounds on $\sum_{a\not =b} f(A^i_{\epsilon}, A^j_{\epsilon}, a,b)$, we get
\begin{align}
    S&\leq \sum_{\substack{i\not=j, \epsilon}}|A^i_{\epsilon}\cap A^j_{\epsilon}|-\sum_{\substack{i\not=j,\epsilon:\\
    \textnormal{$A^i_{\epsilon}, A^j_{\epsilon}$ int-rev}}}|A^i_{\epsilon}\cap A^j_{\epsilon}|^2\nonumber\\
    &\le\sum_{\substack{i\not=j}}|A^i\cap A^j|-\sum_{\substack{i\not=j,\epsilon:\\
    \textnormal{$A^i_{\epsilon}, A^j_{\epsilon}$ int-rev}}}|A^i_{\epsilon}\cap A^j_{\epsilon}|^2\label{equation_Supperbound}.
\end{align}

\subsubsection{Relating the sum of squares to another quadratic expression}

To obtain the desired upper bound on $S$, we will need to lower bound the second sum of the right hand side of~\eqref{equation_Supperbound}. That is, we will have to show that intersection-reverse pairs tend to have large intersection size squared. This will be done using the following key lemma which relates this sum to another quadratic expression which is easier to bound (as shown by Lemma~\ref{lemma_cherrycounting}). 

\begin{lemma}\label{lemma_gainbound}
    If $i\not =j$, then
    \begin{equation*}
        \sum_{\epsilon:\textnormal{ $A^i_{\epsilon}, A^j_{\epsilon}$ int-rev}}|A^i_{\epsilon}\cap A^j_{\epsilon}|^2\geq 2|A^i_0\cap A^j_0||A^i_1\cap A^j_1|.
    \end{equation*}
\end{lemma}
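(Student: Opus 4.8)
The plan is to prove Lemma~\ref{lemma_gainbound} by a case analysis on which of the two pairs $(A^i_0,A^j_0)$ and $(A^i_1,A^j_1)$ are intersection-reverse, and to show that in fact at least one of them must be int-rev, with the product bound following from a comparison of intersection sizes. Write $k_0 = |A^i_0 \cap A^j_0|$ and $k_1 = |A^i_1 \cap A^j_1|$. First I would observe the following structural fact: since no three symbols appear in the same order in $A^i$ and $A^j$, in particular no three common symbols from $A^i_0 \cap A^j_0$ can appear in the same order; but within $A^i$ the set $A^i_0$ is an initial segment, so if $A^i_0$ and $A^j_0$ were \emph{not} int-rev, there would be two common symbols $a,b$ with $a$ before $b$ in both. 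Then I want to leverage the interaction between the two halves. The key point is: if $a,b \in A^i_0 \cap A^j_0$ appear in the same order (say $a$ before $b$ in both $A^i_0$ and $A^j_0$), then \emph{every} common symbol $c \in A^i_1 \cap A^j_1$ gives a forbidden monotone triple unless it is ordered consistently — but $c$ comes after both $a$ and $b$ in $A^i$ (since $A^i_1$ is the second half) and after both $a$ and $b$ in $A^j$, so $a,b,c$ appear in the order $a,b,c$ in both $A^i$ and $A^j$, contradicting the hypothesis. Hence $A^i_1 \cap A^j_1 = \emptyset$, i.e. $k_1 = 0$, so the right-hand side $2k_0 k_1 = 0$ and the inequality is trivial.

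So the real content is the case where both $(A^i_0, A^j_0)$ and $(A^i_1, A^j_1)$ \emph{are} int-rev; I would note that symmetrically, if $(A^i_1,A^j_1)$ is not int-rev then $k_0 = 0$ and again we are done, so we may assume both halves are int-rev. In that case the left-hand side equals $k_0^2 + k_1^2$, and $k_0^2 + k_1^2 \geq 2 k_0 k_1$ by AM--GM, which is exactly the claimed bound. Putting the cases together: either at least one half fails to be int-rev, forcing the corresponding product term on the right to vanish while the left-hand side is nonnegative; or both halves are int-rev, in which case the bound is just $k_0^2 + k_1^2 \geq 2k_0k_1$. That completes the proof.

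The step I expect to be the main obstacle — or at least the one requiring the most care — is the structural claim that a same-order pair in one half forces the \emph{other} half to have empty common intersection. The argument sketched above uses that $A^i_0$ and $A^i_1$ are genuine initial/final segments of $A^i$ (and likewise for $A^j$), so that any $a \in A^i_0 \cap A^j_0$ and $c \in A^i_1 \cap A^j_1$ satisfy: $a$ precedes $c$ in $A^i$ and $a$ precedes $c$ in $A^j$. Combined with a same-order pair $a,b$ in the zeroth halves, the triple $a,b,c$ is monotone in both full orders. One subtlety to handle cleanly is the ordering between $b$ and $c$: both lie in different halves, so again $b$ (in the zeroth half) precedes $c$ (in the first half) in both $A^i$ and $A^j$ — so the triple is consistently ordered as $(a,b,c)$ in both, which is the forbidden configuration. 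I would state this as a short preliminary observation and then the case analysis is immediate. A stylistic choice: one could phrase the whole thing as "if $k_0, k_1 > 0$ then both halves are int-rev", proving the contrapositive via the monotone-triple argument, and then close with AM--GM; this is probably the most economical write-up.
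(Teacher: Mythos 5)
Your proof is correct and follows essentially the same route as the paper: the paper isolates your structural observation as Claim~\ref{lemma_disjoint} (if $A^i_{\epsilon},A^j_{\epsilon}$ is not int-rev then $A^i_{1-\epsilon}$ and $A^j_{1-\epsilon}$ are disjoint, proved exactly via the monotone triple $a,b,c$ you describe), and then closes with the same $k_0^2+k_1^2\geq 2k_0k_1$ step. The only difference is presentational — you inline the claim rather than stating it separately.
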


The following claim is crucial to the proof of Lemma~\ref{lemma_gainbound}.
\begin{claim}\label{lemma_disjoint}
    If $i\not =j$ and the pair $A^i_{\epsilon}, A^j_{\epsilon}$ is not intersection-reverse, then $A^i_{1-\epsilon}$ and $A^j_{1-\epsilon}$ are disjoint.
\end{claim}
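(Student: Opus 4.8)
The plan is to argue by contradiction: suppose $A^i_\epsilon, A^j_\epsilon$ is not intersection-reverse but $A^i_{1-\epsilon}$ and $A^j_{1-\epsilon}$ share a common symbol $c$. Since $A^i_\epsilon, A^j_\epsilon$ is not int-rev, by definition there exist two distinct symbols $a, b \in A^i_\epsilon \cap A^j_\epsilon$ that appear in the \emph{same} order in $A^i_\epsilon$ and in $A^j_\epsilon$ — say, without loss of generality, $a$ precedes $b$ in both. Now I would exploit the fact that $A^i$ is the concatenation of $A^i_0$ and $A^i_1$ (and likewise $A^j$). Consider the three symbols $a, b, c$. In $A^i$, both $a$ and $b$ lie in the block $A^i_\epsilon$ with $a$ before $b$, while $c$ lies in the other block $A^i_{1-\epsilon}$; so depending on whether $\epsilon = 0$ or $\epsilon = 1$, either $c$ comes after both $a$ and $b$ in $A^i$ (if $\epsilon = 0$), or $c$ comes before both of them (if $\epsilon = 1$). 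The crucial point is that the \emph{same} relative position of $c$ with respect to $\{a,b\}$ holds in $A^j$ as well, since $c \in A^j_{1-\epsilon}$ and $a, b \in A^j_\epsilon$, and the block structure is the same index $\epsilon$ for both $A^i$ and $A^j$.

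Thus in the case $\epsilon = 0$, the ordered triple $(a, b, c)$ appears in this very order in both $A^i$ and $A^j$; in the case $\epsilon = 1$, the ordered triple $(c, a, b)$ appears in this order in both $A^i$ and $A^j$. Either way, three symbols appear in the same order in the two distinct linear orders $A^i$ and $A^j$, contradicting our standing hypothesis. Hence no such $c$ exists, i.e.\ $A^i_{1-\epsilon}$ and $A^j_{1-\epsilon}$ are disjoint.

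The only real subtlety — and the step I would be most careful about — is making sure the two symbols $a, b$ witnessing the failure of the int-rev property genuinely lie in $A^i_\epsilon \cap A^j_\epsilon$ and agree in order there, and then correctly tracking how the concatenation $A^i = A^i_0 A^i_1$ places $c$ relative to $\{a,b\}$: this is where the matching of the \emph{same} index $\epsilon$ across $A^i$ and $A^j$ is used, and it is what forces the triple to be ordered identically in both sequences. Everything else is immediate from the definitions, so I expect the proof to be short; the content is entirely in this positional bookkeeping.
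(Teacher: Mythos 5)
Your proposal is correct and follows essentially the same argument as the paper: take two symbols $a,b$ witnessing the failure of the intersection-reverse property in $A^i_\epsilon, A^j_\epsilon$, assume a common symbol $c$ in $A^i_{1-\epsilon}\cap A^j_{1-\epsilon}$, and observe that the concatenation structure forces the triple to appear in the same order ($a,b,c$ if $\epsilon=0$, or $c,a,b$ if $\epsilon=1$) in both $A^i$ and $A^j$, contradicting the hypothesis.
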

\begin{proof}
    If the pair $A^i_{\epsilon}, A^j_{\epsilon}$ is not intersection-reverse, then there are some symbols $a,b$ appearing in both of them such that $a$ precedes $b$ in both $A^i_{\epsilon}$ and $A^j_{\epsilon}$. If there is some symbol $c$ contained in both $A^i_{1-\epsilon}$ and $A^j_{1-\epsilon}$, then the elements $a,b,c$ must appear in the same order in both $A^i$ and $A^j$ (namely, they appear in the order $a,b,c$ if $\epsilon=0$ and in the order $c,a,b$ if $\epsilon=1$), contradicting the assumption that $A^i$, $A^j$ do not have three symbols appearing in the same order.
\end{proof}

\begin{proof}[Proof of Lemma~\ref{lemma_gainbound}]
If both of the pairs $A^i_0,A^j_0$ and $A^i_1, A^j_1$ are intersection-reverse, then
    \begin{equation*}
        \sum_{{\epsilon:\textnormal{ $A^i_{\epsilon}, A^j_{\epsilon}$ int-rev}}}|A^i_{\epsilon}\cap A^j_{\epsilon}|^2= |A^i_0\cap A^j_0|^2+|A^i_1\cap A^j_1|^2\geq 2|A^i_0\cap A^j_0||A^i_1\cap A^j_1|.
    \end{equation*}

    On the other hand, if one of the pairs $A^i_0,A^j_0$ and $A^i_1, A^j_1$ is not intersection-reverse, then the sequences in the other pair are disjoint by Claim~\ref{lemma_disjoint}, so $2|A^i_0\cap A^j_0||A^i_1\cap A^j_1|=0$ and the lemma holds trivially.
\end{proof}

We prove the following lemma using a double counting argument.

\begin{lemma}\label{lemma_cherrycounting}
    Let $\sum_i|A^i|=M$. If the graph corresponding to our $n$ linear orders $A^1,\dots,A^n$ and $n'$ symbols is $K$-almost-regular, $C=C(K)$ is sufficiently large, and $M\geq Cn^{3/2}$, then
    $$\sum_{i \not =j}|A_0^i\cap A_0^j||A_1^i\cap A_1^j|\geq \frac{M^4}{50K^2n^4}.$$
\end{lemma}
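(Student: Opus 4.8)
The plan is to prove Lemma~\ref{lemma_cherrycounting} by a double-counting argument over \emph{cherries}, i.e., paths of length two in the bipartite graph $G$ with a symbol at the centre and two linear orders at the ends, where additionally we keep track of on which side ($0$ or $1$) of each linear order the central symbol lies. Concretely, for an ordered pair $(i,j)$ with $i\ne j$ and $\epsilon,\epsilon'\in\{0,1\}$, let $N_{\epsilon\epsilon'}(i,j)=|A^i_\epsilon\cap A^j_{\epsilon'}|$ be the number of symbols lying in part $\epsilon$ of $A^i$ and part $\epsilon'$ of $A^j$. The quantity we want to lower bound is $\sum_{i\ne j} N_{00}(i,j)N_{11}(i,j)$. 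First I would observe that $\sum_{\epsilon'} N_{\epsilon\epsilon'}(i,j) = |A^i_\epsilon\cap A^j|$, and since $G$ is $K$-almost-regular, for a fixed symbol $a$ lying in $A^i_\epsilon$, the number of $j$ with $a\in A^j$ is, up to the factor $K$, the average symbol-degree; summing over all symbols of $A^i_\epsilon$ then gives $\sum_{j\ne i} |A^i_\epsilon\cap A^j| = \sum_{a\in A^i_\epsilon}(\deg_G(a)-1) \ge |A^i_\epsilon|(\delta(G)-1)$, and similarly an upper bound with $\Delta(G)$.

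The key point is that the four bilinear quantities $\sum_{i\ne j} N_{\epsilon\epsilon'}(i,j)$ (over the four choices of $(\epsilon,\epsilon')$) all have essentially the same order of magnitude, namely $\Theta(M\cdot\bar d)$ where $\bar d=e(G)/(n+n')$ is the average degree and $M=\sum_i|A^i|$; this is exactly because $|A^i_0|$ and $|A^i_1|$ differ by at most one and the graph is almost-regular, so the mass of cherries through any symbol distributes evenly among the four bins. Now the desired sum $\sum_{i\ne j} N_{00}N_{11}$ is one of four sums $\sum_{i\ne j} N_{\epsilon\epsilon'}(i,j)N_{(1-\epsilon)(1-\epsilon')}(i,j)$; by AM–GM, the product of the ``diagonal pair'' $N_{00}N_{11}$ and the ``anti-diagonal pair'' $N_{01}N_{10}$ is at least something, but more directly I would use Cauchy–Schwarz / convexity: $\sum_{i\ne j} N_{00}(i,j)N_{11}(i,j)$ can be bounded below using that $\sum_{i\ne j}(N_{00}+N_{01}+N_{10}+N_{11})(i,j) = \sum_{i\ne j}|A^i\cap A^j|$, which by the cherry count equals $\sum_a \deg_G(a)(\deg_G(a)-1) = \Theta(n' \bar d^2)$, together with the fact that each $N_{\epsilon\epsilon'}$ individually contributes a constant fraction. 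The cleanest route: show $\sum_{i\ne j} N_{00}(i,j)N_{11}(i,j) \ge \tfrac12\big(\sum_{i\ne j}N_{00}(i,j)+N_{11}(i,j)\big)\cdot(\text{something})$ is not quite right; instead I would lower bound $\sum_{i\ne j}N_{00}N_{11}$ by relating it, via Cauchy–Schwarz on the variable $j$, to $\big(\sum_{i\ne j} \min(N_{00},N_{11})(i,j)\big)^2$-type expressions, or more simply bound $N_{00}(i,j)N_{11}(i,j)\ge$ a constant times $|A^i_0\cap A^j|\cdot|A^i_1\cap A^j| /$ (degree factor), exploiting that among the $|A^i_\epsilon\cap A^j|$ symbols, an almost-regular distribution forces a constant fraction to land in each target bin; but since that local pigeonholing can fail for a single pair, I would instead aggregate first.

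So the actual argument I would carry out: (1) compute $T:=\sum_{i\ne j}\sum_{\epsilon,\epsilon'} N_{\epsilon\epsilon'}(i,j) = \sum_a \deg_G(a)(\deg_G(a)-1)$, and using $K$-almost-regularity and $\sum_a \deg_G(a)=e(G)=M$ together with $n' \le Kn$, get $T = \Theta_K(M^2/n)$ — more precisely $T\ge (M/n' - 1)M \ge \tfrac{M^2}{2Kn}$ for $M$ large (since $\delta(G)\ge M/n' - $ negligible, as the $A$-side also has $n$ vertices and average degree $M/n$, and almost-regularity ties $M/n'$ and $M/n$ together up to $K$); (2) show that each of the four sums $T_{\epsilon\epsilon'}:=\sum_{i\ne j}N_{\epsilon\epsilon'}(i,j)$ satisfies $T_{\epsilon\epsilon'} \ge \tfrac{1}{5K}T$ (using that $|A^i_\epsilon|\ge |A^i|/2-1$ and $|A^j_{\epsilon'}|\ge|A^j|/2-1$, and that for a fixed symbol $a$ the contribution $|\{i: a\in A^i_\epsilon\}|\cdot|\{j\ne i:a\in A^j_{\epsilon'}\}|$ is, by almost-regularity in the split structure, at least a $\tfrac{1}{5K}$ fraction of $\deg_G(a)(\deg_G(a)-1)$ — this needs the observation that the two halves of each order are comparable, so the split degrees are within a factor $\approx 3K$ of half the full degree); (3) finally lower bound the target $T_{00}^{(2)}:=\sum_{i\ne j}N_{00}(i,j)N_{11}(i,j)$ by Cauchy–Schwarz: fixing $i$ and summing over $j$, $\sum_j N_{00}(i,j)N_{11}(i,j) \ge \frac{(\sum_j N_{00}(i,j))(\sum_j N_{11}(i,j))}{\text{(number of }j\text{ with either nonzero)}}$ — more carefully, I would use $\sum_j N_{00}N_{11}\ge \frac{(\sum_j \sqrt{N_{00}N_{11}})^2}{\#j}$ and AM–GM, or, cleanest, the inequality $\sum_{i\ne j} N_{00}(i,j)N_{11}(i,j) \ge \frac{(\sum_{i\ne j}N_{00}(i,j))(\sum_{i\ne j}N_{11}(i,j))}{\sum_{i\ne j} \mathbf 1[N_{00}(i,j)+N_{11}(i,j)>0]}$ is false in general, so instead I would count cherry-pairs directly: $\sum_{i\ne j}N_{00}(i,j)N_{11}(i,j)$ counts pairs of symbols $(a,b)$ with $a\in A^i_0\cap A^j_0$, $b\in A^i_1\cap A^j_1$, i.e. for each ordered pair of symbols $(a,b)$, the number of ordered pairs $(i,j)$ with $a\in A^i_0, a\in A^j_0, b\in A^i_1, b\in A^j_1$, which is $c_0(a)c_0(a)$-ish — precisely it equals $\sum_{a\ne b}|\{i: a\in A^i_0, b\in A^i_1\}|\cdot|\{j: a\in A^j_0, b\in A^j_1\}| - (\text{diagonal})= \sum_{a\ne b} d(a,b)^2 - \sum_{a\ne b}d(a,b)$ where $d(a,b)=|\{i: a\in A^i_0,\ b\in A^i_1\}|$, and then $\sum_{a\ne b}d(a,b) = \sum_i |A^i_0||A^i_1| \ge \tfrac14\sum_i(|A^i|-1)^2 \ge \tfrac{1}{5}\cdot\frac{M^2}{n}$ for $M$ large by Jensen (convexity of $x\mapsto x^2$, since there are $n$ orders summing to $M$, subtracting lower-order terms), and by Cauchy–Schwarz $\sum_{a\ne b}d(a,b)^2 \ge \frac{(\sum_{a\ne b}d(a,b))^2}{\binom{n'}{2}\cdot 2} \ge \frac{(M^2/(5n))^2}{n'^2} \ge \frac{M^4}{25n^2 K^2 n^2}$ using $n'\le Kn$; finally subtract the linear term $\sum_{a\ne b}d(a,b)=O(M^2/n)$, which is negligible compared to $M^4/(50K^2n^4)$ precisely when $M\ge Cn^{3/2}$ for $C=C(K)$ large, yielding the claimed bound $\ge \frac{M^4}{50K^2n^4}$.

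The main obstacle I anticipate is book-keeping the constants so that the final bound comes out as exactly $\frac{M^4}{50K^2n^4}$: I need $\sum_i|A^i_0||A^i_1|\ge \tfrac15 M^2/n$, which requires $\sum_i|A^i_0||A^i_1|\ge \tfrac14\sum_i|A^i|^2 - O(M)$ and then $\sum_i|A^i|^2\ge M^2/n$ by Jensen, and absorbing the $O(M)$ and $O(M^2/n)$ error terms into the main term using $M\ge Cn^{3/2}$ with $C$ chosen after $K$; and I must make sure the Cauchy–Schwarz step uses the right count of pairs $(a,b)$, namely at most $(n')^2\le K^2n^2$, so that $\big(\tfrac15 M^2/n\big)^2/(K^2n^2) = M^4/(25K^2n^4)$, leaving a comfortable factor-$2$ slack to kill the lower-order terms. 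No deep idea is needed beyond Cauchy–Schwarz, Jensen, and the $n'\le Kn$ bound recorded just before the lemma; the content is entirely in the clean reformulation $\sum_{i\ne j}N_{00}N_{11} = \sum_{a\ne b} d(a,b)^2 - \sum_{a\ne b}d(a,b)$ with $d(a,b)=|\{i:a\in A^i_0,b\in A^i_1\}|$.
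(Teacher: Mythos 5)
Your final argument is correct and is essentially identical to the paper's: the key identity $\sum_{i\ne j}|A_0^i\cap A_0^j||A_1^i\cap A_1^j| = \sum_{a,b} d(a,b)(d(a,b)-1)$ with $d(a,b)=|\{i: a\in A^i_0,\ b\in A^i_1\}|$ is exactly the paper's $\sum_{a,b}2\binom{s_{a,b}}{2}$, and your Cauchy--Schwarz on $d(a,b)$ together with $\sum_{a,b}d(a,b)=\sum_i|A^i_0||A^i_1|\ge \tfrac15 M^2/n$ is the paper's Jensen step applied to $\binom{\cdot}{2}$. The several discarded sub-plans in the middle of your write-up (pigeonholing into the four bins $N_{\epsilon\epsilon'}$, Cauchy--Schwarz over $j$ for fixed $i$, etc.) are unnecessary once you reach the double-counting reformulation, which is the whole content of the lemma.
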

\begin{proof}
For any two symbols $a,b$, let $s_{a,b}=|\{i: a\in A_0^i, b\in A_1^i\}|$. Then,

\begin{equation}
\label{lem2.6eq1}
\sum_{i\not =j}|A_0^i\cap A_0^j||A_1^i\cap A_1^j|=|\{(i,j,a,b):i\not =j, a\in A_0^i\cap A_0^j, b\in A_1^i\cap A_1^j\}|=\sum_{a,b}2\binom{s_{a,b}}{2}.
\end{equation}

On the other hand, 
\begin{equation}
\label{lem2.6eq2}
\sum_{a,b}s_{a,b}=|\{(i,a,b): a\in A^i_0, b\in A^i_1\}|=\sum_i |A_0^i||A_1^i|\geq \frac{1}{5}\sum_i|A^i|^2\geq \frac{M^2}{5n}.
\end{equation}

(Here we used that for all $i$, $|A^i|>1$ if $C$ is large enough.)
Hence, by \eqref{lem2.6eq1}, \eqref{lem2.6eq2}, and Jensen's inequality, if $C$ is large enough, we get 
\begin{align*}
    \sum_{i\not =j}|A_0^i\cap A_0^j||A_1^i\cap A_1^j|&=\sum_{a,b}2\binom{s_{a,b}}2\\
    &\geq2n'^2\binom{\sum_{a,b}s_{a,b}/n'^2}{2}\\
    &\geq 2n'^2\binom{M^2/(5nn'^2)}{2}\\
    &\geq \frac{M^4}{50n^2n'^2}\ge\frac{M^4}{50K^2n^4}.
\end{align*}
This completes the proof of the lemma.
\end{proof}

\subsubsection{Proof of the main result}

We are now ready to put everything together to complete the proof of Theorem~\ref{theorem_almostregular} (and hence also the proof of Theorem~\ref{theorem_linear orders}).

\begin{proof}[Proof of Theorem~\ref{theorem_almostregular}]
    Let $G$ denote the graph corresponding to the linear orders $A^1, \dots, A^n$ and the set of $n'$ symbols.  Let $M$ denote the number of edges in $G$. We need to show that $M=O(n^{3/2})$. Suppose for a contradiction that $M>Cn^{3/2}$, where $C$ is a constant to be chosen later.
    
    By Lemma~\ref{lemma_gainbound}, Lemma~\ref{lemma_cherrycounting} and~\eqref{equation_Supperbound}, if $C$ is sufficiently large, we have
    \begin{align*}
        S\leq \sum_{i\not =j}|A^i\cap A^j|-\frac{M^4}{25K^2n^4}.
    \end{align*}
    Observe that $M/n$ and $M/n'$ are the average degrees of the vertices in the two parts of $G$. Thus, both $M/n$ and $M/n'$ are in between the minimum degree $\delta=\delta(G)$ and the maximum degree $\Delta=\Delta(G)$ of the graph $G$. For each symbol $a$, let $d_a$ denote its degree in $G$, that is, the number of sequences $A^i$ containing it. We have
    \begin{align*}
        \sum_{i\not =j}|A^i\cap A^j|&=|\{(i,j,a):i\not =j,a\in A^i\cap A^j\}|=\sum_a 2\binom{d_a}{2}
        \leq n'\Delta^2\leq n'K^2\delta^2
        \le \frac{K^2M^2}{n},
    \end{align*}
    giving
    \begin{align*}
        S\leq \frac{K^2 M^2}{n}-\frac{M^4}{25K^2n^4}.
    \end{align*}
    But Lemma~\ref{lemma_Slowerbound} gives $S\geq -\frac{1}{2}\sum |A^i|^2\geq -\frac{1}{2}n\Delta^2\geq -\frac{1}{2}K^2n\delta^2\geq -\frac{K^2M^2}{2n}$. Therefore, $\frac{M^4}{25K^2n^4}\leq \frac{3}{2}\frac{K^2M^2}{n}$, i.e.,
        $M\leq \sqrt{\frac{75}{2}}K^2n^{3/2}$. Choosing $C>7K^2$ and so that it is also large enough for Lemma~\ref{lemma_cherrycounting} to apply gives the desired contradiction.
\end{proof}

\section{Edge-ordered graphs} \label{sec:edge-ordered graphs}

\subsection{Edge-ordered four-cycles}

In this subsection we prove Theorem~\ref{thm:ordered C4}, giving a tight bound on the extremal number of edge-ordered $4$-cycles.

\begin{proof}[Proof of Theorem \ref{thm:ordered C4}]
    As mentioned in the introduction, the lower bound follows trivially from known bounds (see e.g., \cite{furedi2013history}) on the extremal number of the unordered $4$-cycle.
    
    For the upper bound, we will prove that $\ex_<(n,C_4^{1243})\leq (C/2)n^{3/2}$, where $C$ is the constant from Theorem \ref{theorem_linear orders}. Let $G$ be an edge-ordered graph on $n$ vertices with $e(G)>(C/2)n^{3/2}$, where the edges of $G$ are ordered by the linear order $<_G$. For each vertex $u$ in $G$, define a linear order $A^u$ on the set of neighbors of $u$ in which $x<y$ if $ux <_G uy$ in the edge-ordering of $G$. Note that $\sum_u|A^u|=2e(G)>Cn^{3/2}$, so by Theorem \ref{theorem_linear orders}  we obtain distinct vertices $u,v$ of $G$ and distinct vertices $x_1,x_2,x_3$ in the common neighborhood of $u$ and $v$ such that $x_1$, $x_2$ and $x_3$ appear in the same order in both $A^u$ and $A^v$. Without loss of generality, this means that $ux_1 <_G ux_2 <_G ux_3$ and $vx_1 <_G vx_2 <_G vx_3$. By the pigeonhole principle, there exist $1\leq i<j\leq 3$ such that either we have both $ux_i>_G vx_i$ and $ux_j >_G vx_j$ or we have both $ux_i <_G vx_i$ and $ux_j <_G vx_j$. In either case, it is easy to check that the four-cycle $ux_ivx_j$ is isomorphic to $C_4^{1243}$, so $G$ contains $C_4^{1243}$. Hence, we have $\ex_<(n,C_4^{1243})\leq(C/2)n^{3/2}$, as desired.
\end{proof}

\subsection{Edge-ordered forests}

In this subsection we prove Theorem \ref{thm:tree construction}. The proof relies on a very recent construction of Pettie and Tardos~\cite{pettie2024refutation} giving new lower bounds for the extremal numbers of some zero-one matrices. The setting of zero-one matrices is essentially equivalent to the one of vertex-ordered graphs mentioned in the introduction (see, e.g.,~\cite{PT06}), so to simplify this presentation we will only refer to zero-one matrices here instead of vertex-ordered graphs. For zero-one matrices $A$ and $M$, we say that $M$ \textit{contains} $A$ if, by deleting some rows and columns from $M$, and possibly turning some of its $1$-entries to $0$-entries, we can obtain $A$. Otherwise, we say that $M$ \emph{avoids} $A$. The \textit{weight} of a zero-one matrix $M$, denoted $w(M)$, is the number of $1$-entries in $M$. The \textit{extremal number} of a zero-one matrix $A$, denoted $\Exx(n,A)$, is the maximum possible weight of an $n\times n$ zero-one matrix that avoids $A$.

Let $A$ be a zero-one matrix. We turn $A$ into an edge-ordered graph $G(A)$ by taking the bipartite graph whose adjacency matrix is $A$ and ordering its edges lexicographically. Note that $G(A)$ has vertices corresponding to the rows and columns of $A$ and each edge connects a row-vertex and a column-vertex. By lexicographic ordering, we mean that the edges are ordered first according to their column-vertex (left to right) and the edges incident to the same column-vertex are ordered according to their row-vertex (top to bottom). 

We will need the following lemma which connects the extremal functions of $A$ and $G(A)$. Let us call the following matrix the \emph{hat} pattern:
$$\left(\begin{array}{ccc}0&1&0\\1&0&1\end{array}\right).$$

\begin{lemma}\label{connect}
Let $A$ be a zero-one matrix without an all-0 column. If $A$ contains the hat pattern and each pair of consecutive rows in $A$ share a $1$-entry in a common column, then
$$\ex_<(2n,G(A))\ge\Exx(n,A).$$
\end{lemma}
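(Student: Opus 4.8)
\textbf{Proof plan for Lemma~\ref{connect}.}

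The plan is to take a zero-one matrix $M$ of size $n \times n$ avoiding $A$ with weight $w(M) = \Exx(n,A)$, and build from it an edge-ordered graph $H$ on at most $2n$ vertices with the same number of edges that avoids $G(A)$; padding with isolated vertices then gives the claimed inequality for $2n$. The natural candidate for $H$ is exactly $G(M)$: the bipartite graph whose adjacency matrix is $M$, with row-vertices $r_1,\dots,r_n$ (ordered top to bottom), column-vertices $c_1,\dots,c_n$ (ordered left to right), and edges ordered lexicographically by (column, then row), just as in the definition of $G(A)$. Clearly $e(G(M)) = w(M)$ and $G(M)$ has $2n$ vertices, so the content is entirely in showing that if $G(M)$ contains $G(A)$ as an edge-ordered subgraph, then $M$ contains $A$ as a zero-one matrix — equivalently, the contrapositive: $M$ avoids $A$ implies $G(M)$ avoids $G(A)$.

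So suppose $G(M)$ contains a copy of $G(A)$, say via an embedding $\phi$ of $G(A)$ into $G(M)$ that respects the edge-order. The first step is to argue that $\phi$ must send row-vertices of $G(A)$ to row-vertices of $G(M)$ and column-vertices to column-vertices (i.e.\ the embedding is "bipartition-respecting"). This is where the hypotheses on $A$ enter. A vertex of $G(A)$ that is a column-vertex has all its incident edges forming a contiguous block in the lex order (they come before or after every edge incident to another column-vertex), whereas a row-vertex's incident edges are "spread out". More concretely: the hat pattern inside $A$ forces a configuration where one vertex (the center of the hat) is adjacent to two vertices that are themselves each adjacent to a further common vertex on the other side; combined with the condition that consecutive rows of $A$ share a common column, this should pin down which side of the bipartition a vertex lands on. I would make this precise by observing that in $G(M)$, for a column-vertex $c$ and any two of its edges $c r, c r'$, \emph{all} edges of $G(M)$ strictly between them in the lex order are also incident to $c$; a row-vertex never has this property once it has degree $\ge 2$ and there is another row below it sharing a column (which the hat pattern plus the consecutive-rows condition guarantees inside the image). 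Hence $\phi$ cannot "flip" the two sides, and since $A$ has no all-zero column (so $G(A)$ is connected enough, or at least every column-vertex has degree $\ge 1$), the whole embedding is bipartition-respecting.

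Once $\phi$ respects the bipartition, it induces an injection from rows of $A$ to rows of $M$ and from columns of $A$ to columns of $M$, and the edge-order-preservation of $\phi$ forces these injections to be \emph{monotone}: if row $r$ is above row $r'$ in $A$, then $\phi(r)$ is above $\phi(r')$ in $M$ (and similarly for columns). This monotonicity is exactly what is needed to say that restricting $M$ to the image rows and columns, and zeroing out the entries not used by $\phi$, yields a copy of $A$; hence $M$ contains $A$ as a zero-one matrix. To see the monotonicity, note that if the row-injection reversed the order of two rows $r, r'$ that both have a $1$ in some common column $c$ of $A$ (such a column exists for consecutive rows by hypothesis, and one can chain this along), then the two edges $rc$ and $r'c$ of $G(A)$ would be mapped to two edges of $G(M)$ incident to the common column-vertex $\phi(c)$ but in the reversed within-column (i.e.\ top-to-bottom) order, contradicting that $\phi$ preserves the lex edge-order. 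A symmetric argument, now using the hat pattern to produce two columns sharing a row, handles the column-injection. Chaining "consecutive rows share a column" along all of $A$ upgrades the pairwise statement to full monotonicity of the row-injection.

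The main obstacle I anticipate is the first step — ruling out a bipartition-\emph{reversing} embedding (one that sends the row-side of $G(A)$ into the column-side of $G(M)$ and vice versa). The lex edge-order is genuinely asymmetric between the two sides of $G(M)$ (a column's edges are lex-contiguous, a row's are not), so intuitively such a reversal should be impossible, but turning this into a clean argument requires carefully using precisely the two structural hypotheses on $A$ (containing the hat, and consecutive rows sharing a column) to locate inside any image of $G(A)$ a sub-configuration whose lex-order "signature" is incompatible with being realized on the wrong side. Once that is nailed down, the monotonicity and the extraction of a copy of $A$ in $M$ are routine bookkeeping about lexicographic orders.
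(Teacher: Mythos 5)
Your approach matches the paper's: reduce to showing $G(M)$ avoids $G(A)$, use connectivity of $G(A)$ (forced by the consecutive-rows and no-all-zero-column hypotheses) to see that any embedding $\phi$ must globally preserve or globally swap the row/column bipartition, use the hat pattern to rule out a global swap, then extract order-preserving row- and column-injections and hence a copy of $A$ inside $M$. One concrete misstep: your argument for \emph{column} monotonicity does not work as stated. You propose ``a symmetric argument, now using the hat pattern to produce two columns sharing a row.'' The hat gives you exactly one pair of columns sharing a row, and there is no hypothesis for columns analogous to ``consecutive rows share a $1$-entry'' that would let you chain this along all columns of $A$; the matrices $S_t$ themselves illustrate that consecutive columns need not share a row. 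So a shared-row argument cannot deliver monotonicity of the full column-injection.

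The fix is that no shared row is needed at all, and the column case is in fact the easier one. The lexicographic edge-order on $G(A)$ and $G(M)$ is \emph{column-first}: if $u$ and $v$ are column-vertices of $G(A)$ with $u$ to the left of $v$, then \emph{every} edge incident to $u$ precedes \emph{every} edge incident to $v$. Since $A$ has no all-zero column, each of $u,v$ has at least one incident edge, say $e\ni u$ and $e'\ni v$, with $e<e'$ in $G(A)$. Order-preservation of $\phi$ gives $\phi(e)<\phi(e')$, and since $\phi(e)$ is incident to the column-vertex $\phi(u)$ and $\phi(e')$ to $\phi(v)$, this forces $\phi(u)$ to the left of $\phi(v)$. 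This is exactly the paper's argument. (The shared-column path-of-length-two device is genuinely needed only for the row-injection, where two edges in different columns cannot be compared directly.) A smaller stylistic point: it is cleaner to first argue connectivity of $G(A)$ (consecutive row-vertices joined by length-two paths through a shared column, no isolated column-vertices), conclude that $\phi$ is globally bipartition-preserving or globally bipartition-reversing, and only then invoke the hat to exclude the reversing case; as written you rule out ``flipping'' before the global dichotomy is available.
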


\begin{proof}
    Let $M$ be an $n\times n$ zero-one matrix avoiding $A$ and having the maximum possible weight i.e., $w(M)=\Exx(n,A)$. Notice that the edge-ordered graph $G(M)$ has $2n$ vertices and $\Exx(n,A)$ edges, so it is enough to prove that it avoids $G(A)$. Assume for a contradiction that it contains $G(A)$ as an edge-ordered subgraph. Let $f$ be an isomorphism that maps $G(A)$ to an edge-ordered subgraph of $G(M)$.
    
    Both $G(A)$ and $G(M)$ are bipartite graphs with their edges connecting row-vertices to column-vertices. By assumption, each pair of consecutive row-vertices in $G(A)$ are connected by a path of length two and it has no isolated column-vertices, so $G(A)$ is a connected graph. Thus, $f$ must either map all row-vertices of $G(A)$ to row-vertices of $G(M)$ and all column-vertices of $G(A)$ to column-vertices of $G(M)$, or else it maps all row-vertices to column-vertices and all column-vertices to row-vertices.

    First, we show that the latter of the two is not possible. Indeed, let $e_1<e_2<e_3$ be the three edges of $G(A)$ that correspond to a hat pattern. Note that $e_1$ and $e_3$ share a common row-vertex $v$ and $e_2$ is not incident to $v$. Now, if $f$ maps $v$ to a column vertex in $G(M)$, then the image of $e_1$ and the image of $e_3$ would share this column-vertex but the image of $e_2$ would not, making $f(e_1)<f(e_2)<f(e_3)$ impossible, contradicting our assumption that $f$ is an isomorphism of edge-ordered graphs.

    Therefore, $f$ maps all row-vertices to row-vertices and all column-vertices to column-vertices. We claim that $f$ preserves the order of both the row-vertices and the column-vertices. For two column-vertices $u, v$ of $G(A)$, consider edges $e, e'$ of $G(A)$, such that $e$ is incident to $u$ and $e'$ is incident to $v$. (Here the existence of $e$ and $e'$ is guaranteed by our assumption that $A$ contains no all-$0$ column.) If $f$ switched the order of $u$ and $v$, it would also switch the order of the incident edges $e$ and $e'$, a contradiction to our assumption that $f$ is an isomorphism of edge-ordered graphs. To show that $f$ preserves the order of row-vertices, consider two row-vertices $u, v$ of $G(A)$ corresponding to consecutive rows of $A$ and the edges $h_1, h_2$ of $G(A)$ in a path of length two connecting $u$ and $v$ (where the existence of such a path is guaranteed by our assumption that a column exists with a $1$-entry in both rows). Again, if $f$ switched the order of $u, v$, it would also switch the order of the two edges $h_1, h_2$ in this path, a contradiction.
    
    As $f$ preserves the order of both row-vertices and column-vertices, the image of $f$ provides a submatrix of $M$ which is isomorphic to $A$. This is a contradiction again, as $M$ is assumed to avoid $A$. This proves the lemma.
\end{proof}

Let us comment on the several assumptions made in Lemma~\ref{connect}. The assumptions that $A$ contains no all-0 column but contains the hat pattern are technical in nature and can be relaxed. However, the assumption that each pair of consecutive rows in $A$ have a $1$-entry in a common column is essential as shown by the following example. Let us define zero-one matrices $Z$ and $Z'$ as follows.

\begin{equation*}Z=\left(
\begin{array}{cccc}
    1 & 0 & 1 & 0\\
    0 & 1 & 0 & 1\\
    1 & 0 & 0 & 1
\end{array}\right)\;\;\;\;
Z'=\left(
\begin{array}{cccc}
    0 & 1 & 0 & 1\\
    1 & 0 & 1 & 0\\
    1 & 0 & 0 & 1
\end{array}\right)
\end{equation*}

Observe that $G(Z)$ and $G(Z')$ are isomorphic edge-ordered graphs. Thus, if a matrix $A$ avoids $Z$ but not $Z'$, then $G(A)$ \emph{does not} avoid $G(Z)$ and we cannot prove the inequality stated in Lemma~\ref{connect} for $A=Z$ with the method above. Indeed, $\ex_<(2n,G(Z))=n2^{O(\sqrt{\log n})}$ follows from the general upper bound in \cite{kucheriya2023characterization} but the best upper bound known for $\Exx(n,Z)$ is somewhat weaker: $\Exx(n,Z)=n2^{O((\log n)^{2/3})}$, see~\cite{KTTW}.

To prove Theorem~\ref{thm:tree construction}, we will also need the simple observation that the order-chromatic number of $G(A)$ is 2 for all (not-all-0) zero-one matrices $A$. In fact, this is a characterization of edge-ordered graphs with order-chromatic number 2, see part (3) of Corollary~2.6 from \cite{GMNPTV23}.

Let $S_t$ denote the $3\times(2t)$ zero-one matrix in which the first row consists of alternating 0's and 1's starting with a 1, the last row also consists of alternating 0's and 1's, but this time starting with a 0, while the middle row contains exactly two 1's and they are in the first and last columns. Here are two examples:

\begin{equation*}S_2=\left(
\begin{array}{cccc}
    1 & 0 & 1 & 0\\
    1 & 0 & 0 & 1\\
    0 & 1 & 0 & 1
\end{array}\right)\;\;\;\;
S_4=\left(
\begin{array}{cccccccc}
    1 & 0 & 1 & 0 & 1 & 0 & 1 & 0\\
    1 & 0 & 0 & 0 & 0 & 0 & 0 & 1\\
    0 & 1 & 0 & 1 & 0 & 1 & 0 & 1
\end{array}\right)
\end{equation*}

In the next theorem, $\log$ denotes the binary logarithm.

\begin{theorem}[Pettie and Tardos~{\cite[Theorem~2.2]{pettie2024refutation}}]\label{theorem_pettietardos}
For any fixed $t\ge2$, we have $\Exx(n,S_t)\geq n2^{(1-o(1))\sqrt{\log t\log n}}$.
\end{theorem}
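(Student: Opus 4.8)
The plan is to prove the lower bound by an explicit recursive construction: for every $n$ we build an $n\times n$ zero-one matrix that avoids $S_t$ and has weight at least $n\cdot 2^{(1-o(1))\sqrt{\log t\,\log n}}$. The construction runs in $k\approx\sqrt{\log n/\log t}$ levels, producing $S_t$-avoiding matrices $M_0,M_1,\dots,M_k$, and we track only the side length $N_i$ of $M_i$ and its average row weight $D_i=w(M_i)/N_i$. The target at level $i$ is that the density be boosted by a factor of essentially $t$, so that $D_i=(1-o(1))\,t\,D_{i-1}$, while the side length grows by a factor of roughly $t^{2i}$; then $\log D_k=(1-o(1))k\log t$ and $\log N_k=(1+o(1))k^2\log t$, and eliminating $k$ gives $\log D_k=(1-o(1))\sqrt{\log t\,\log N_k}$. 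Finally, $S_t$ is a connected pattern (its two extreme columns tie the middle row to the other two rows), so a block-diagonal arrangement of $S_t$-avoiding matrices is again $S_t$-avoiding; taking $\lfloor n/N_k\rfloor$ diagonal copies of $M_k$ and padding with zeros then produces an $n\times n$ matrix of weight at least $(1-o(1))\,n\,D_k=n\cdot 2^{(1-o(1))\sqrt{\log t\,\log n}}$ that still avoids $S_t$.

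The heart of the argument is a single amplification step turning an $S_t$-avoiding matrix $B$ into a larger $S_t$-avoiding matrix of roughly $t$ times the average density. I would realize it as a \emph{pruned substitution product} $P\langle B\rangle$: fix a sparse ``scaffold'' matrix $P$, replace every $1$-entry of $P$ by a copy of $B$ and every $0$-entry by an all-zero block of the same shape, and then delete a small number of $1$'s from some copies so as to destroy copies of $S_t$ that straddle several blocks. The scaffold should be $S_t$-avoiding, have average row degree $\approx t$, and --- crucially --- have \emph{bandwidth} below $2t-1$, i.e.\ the $1$'s of any single row of $P$ lie within an interval of fewer than $2t-1$ columns; a band matrix of bandwidth $\approx t$ works. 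The bandwidth condition is tailored to the defining feature of $S_t$: the middle row of $S_t$ has $1$'s only in its two extreme columns $c_1$ and $c_{2t}$, at distance at least $2t-1$, so no row of $P$ can serve as the middle row, and in fact the $2t$ columns of any would-be copy of $S_t$ in $P\langle B\rangle$ are confined to $O(t)$ consecutive block-columns. Such a copy then either lies inside a single copy of $B$ --- impossible, since $B$ avoids $S_t$ --- or truly spans at least two block-rows or block-columns, and in the latter case one plays the scaffold structure (each block-row offers only $\approx t$ active block-columns, which the alternating rows $1$ and $3$ of $S_t$ must interleave) against the avoidance of $B$ (each involved block, being a copy of $B$, is itself too unstructured) to reach a contradiction; the pruning handles the mixed cases in which part of the pattern sits inside a block and part across blocks.

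For the bookkeeping I would take $M_0$ to be the $t\times t$ identity (trivially $S_t$-avoiding), and apply the amplification step $k$ times, using at level $i$ a scaffold of side length $\approx t^{2i}$. An easy induction then gives $N_k\approx t^{\Theta(k^2)}$ and $D_k\approx t^{\Theta(k)}$, with the multiplicative constants depending only on $t$; since $t$ is fixed these are absorbed into the $o(1)$ in the exponent, as are the losses in the first few (slightly lossy) levels and the rounding when passing from $N_k$ to $n$. Choosing $k$ to be the largest integer with $N_k\le n$ makes $k=(1-o(1))\sqrt{\log n/\log t}$ and hence $D_k=2^{(1-o(1))\sqrt{\log t\,\log n}}$; combined with the block-diagonal padding from the first paragraph, this yields $\Exx(n,S_t)\ge n\cdot 2^{(1-o(1))\sqrt{\log t\,\log n}}$.

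The main obstacle is showing that the amplification step preserves $S_t$-avoidance while pruning only lightly. The difficulty is the analysis of a hypothetical copy of $S_t$ in $P\langle B\rangle$: it spreads its $3$ rows among the block-rows of $P$ and its $2t$ columns among the block-columns, and once the bandwidth forces those columns into $O(t)$ consecutive block-columns, a pigeonhole argument shows that several of the $c_j$'s share a block-column; one must then trade off the number of block-columns used against how densely the alternating rows of $S_t$ populate each block, reducing to the $S_t$-avoidance of $B$ --- very possibly after strengthening the inductive hypothesis so that every $M_i$ avoids all of $S_2,\dots,S_t$ rather than $S_t$ alone. Carrying out this case analysis and simultaneously verifying that the pruning deletes only a $o(1)$ fraction of the $1$'s --- so that the average density genuinely multiplies by $(1-o(1))\,t$ rather than being eroded --- is the delicate quantitative heart of the construction, and it is exactly this balance that pins the per-level gain at $\approx t$ and thereby places the factor $\log t$ inside the square root.
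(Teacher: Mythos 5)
This theorem is not proved in the paper at all: it is quoted verbatim as Theorem~2.2 of Pettie--Tardos \cite{pettie2024refutation}, so there is no internal proof to compare against. Judged on its own, your proposal has the right global architecture --- a $k$-level recursion in which the density is multiplied by roughly $t$ per level while the side length grows like $t^{2i}$ at level $i$, giving $\log D_k\approx k\log t$, $\log N_k\approx k^2\log t$ and hence the exponent $\sqrt{\log t\log n}$, followed by block-diagonal padding (which is indeed harmless since $S_t$ is connected). This bookkeeping, and the observation that the middle row of $S_t$ having its two $1$'s at column distance $2t-1$ is the feature the construction must exploit, are both on target. But the entire mathematical content of the theorem sits in the amplification step, and you do not carry it out: you posit a ``pruned substitution product'' over a band scaffold and then say that one ``plays the scaffold structure against the avoidance of $B$ to reach a contradiction'' and that ``the pruning handles the mixed cases,'' explicitly deferring the case analysis and the verification that only an $o(1)$ fraction of $1$'s is pruned. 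Since any per-level density loss by a constant factor already destroys the $(1-o(1))$ in the exponent, this is not a minor detail to be filled in; it is the theorem. What you have written is a restatement of the difficulty, not a resolution of it (and indeed the actual Pettie--Tardos construction is a considerably more intricate doubly-indexed recursion in the style of generalized Davenport--Schinzel lower bounds, whose discovery was the point of their paper).

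One of your proposed escape hatches is moreover provably unusable for large $t$. You suggest ``strengthening the inductive hypothesis so that every $M_i$ avoids all of $S_2,\dots,S_t$.'' But columns $1,2,2t-1,2t$ of $S_t$ form exactly a copy of $S_2$, so avoiding $S_2$ already implies avoiding $S_t$; and by Lemma~\ref{connect} of this paper together with the Kucheriya--Tardos upper bound, $\Exx(n,S_2)\le \ex_<(2n,G(S_2))\le n2^{c\sqrt{\log n}}$ for an \emph{absolute} constant $c$. Hence a sequence of $S_2$-avoiding matrices can never reach density $2^{(1-o(1))\sqrt{\log t\log n}}$ once $\sqrt{\log t}>c$, so the strengthened induction is self-defeating exactly in the regime where the $\sqrt{\log t}$ factor matters. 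Any correct construction must therefore deliberately contain small copies of $S_{t'}$ for $t'<t$ while avoiding $S_t$ itself, which rules out the cheapest version of your ``reduce to a smaller pattern inside one block'' strategy and shows that the missing case analysis cannot be patched in the way you indicate.
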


We are now ready to prove Theorem~\ref{thm:tree construction} using Lemma~\ref{connect} and Theorem~\ref{theorem_pettietardos}.

\begin{proof}[Proof of Theorem~\ref{thm:tree construction}]
Note that $S_t$ satisfies all conditions of Lemma~\ref{connect} if $t\ge2$. Thus,
$$\ex_<(2n,G(S_t))\ge\Exx(n,S_t)\ge n2^{(1-o(1))\sqrt{\log t\log n}}.$$
    Here $G(S_t)$ is a tree. In fact, it can be obtained from two disjoint $t$-edge stars by connecting a new vertex to one leaf of each star. So $G(S_t)$ is an edge-ordered tree of order chromatic number two. As $t$ is arbitrary, the constant factor $\sqrt{\log t}$ in the exponent can be made arbitrarily large.
    \end{proof}

    Note that the special case of $t=2$ gives the edge-ordered graph $G(S_2)$ which is the path $P_6^{412563}$, i.e., the six-edge path with the edge-ordering given by labeling the edges along the path using the labels $4,1,2,5,6,3$ and ordering the edges according to their label. This simple example already refutes the conjecture of Kucheriya and Tardos~\cite{kucheriya2023characterization} mentioned in the introduction, whereas Theorem~\ref{thm:tree construction} constructs a family of counterexamples showing that even the constant factor $C$ in the exponent be made arbitrarily large.
    
\section{Concluding remarks} \label{sec:concluding remarks}

\subsection{Geometric graphs without self-intersecting four-cycles}

We used our Theorem \ref{thm:main cyclic} to prove that every $n$-vertex topological graph without a self-intersecting four-cycle has $O(n^{3/2})$ edges. Here we give a very short proof of the following weaker result, concerning geometric graphs, which only uses our bound on the extremal number of the edge-ordered four-cycle $C_4^{1243}$. We do so to further illustrate the various connections between the several topics studied in our paper.

\begin{theorem}
    If $G$ is an $n$-vertex geometric graph which contains no self-intersecting $4$-cycles, then $e(G)=O(n^{3/2})$.
\end{theorem}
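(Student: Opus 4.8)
The plan is to reduce the problem, via a standard planarity argument, to the edge-ordered forbidden configuration we have already handled, namely $C_4^{1243}$. The key observation is that in a \emph{geometric} graph (straight-line edges), a four-cycle $v_1v_2v_3v_4$ is self-intersecting precisely when the edge $v_1v_2$ crosses the edge $v_3v_4$, or the edge $v_2v_3$ crosses the edge $v_4v_1$ — i.e. a pair of opposite edges crosses. So if $G$ contains no self-intersecting four-cycle, then for every four-cycle the two opposite pairs of edges are non-crossing (as straight segments). Fix a four-cycle $v_1v_2v_3v_4$; since the segments $v_1v_2$ and $v_3v_4$ do not cross and the segments $v_2v_3$ and $v_4v_1$ do not cross, the four points must be in convex position with $v_1v_3$ and $v_2v_4$ as the diagonals. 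Equivalently, a $4$-cycle $abcd$ with $a,b,c,d$ in convex position and $ac, bd$ diagonals is the \emph{only} way a $4$-cycle can appear; every "crossing" $4$-cycle — one where the cyclic order around the convex hull does not match the cycle order — is self-intersecting.

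Now I would choose an edge-ordering of $G$ that encodes a sweep direction, exactly as in the proof of Theorem~\ref{thm:ordered C4}. Concretely, pick a generic direction and order the edges of $G$ by, say, the $x$-coordinate of their midpoints (or lexicographically by their endpoints in that direction), breaking ties arbitrarily; call this order $<_G$. The claim is then that this edge-ordered graph contains no copy of $C_4^{1243}$: a copy of $C_4^{1243}$ is a four-cycle $abcd$ with $ab <_G bc$, $bc$ and $da$ comparable in a specific way, $da <_G cd$, and in particular $ab, cd$ are the two edges that are "extreme" in the order among the four — which, when the order is geometrically meaningful, forces the combinatorial structure of a non-convex (hence self-crossing) $4$-cycle. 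I would verify by a short case check that whenever a $4$-cycle realizes the edge-order pattern $ab<bc<da<cd$, the four vertices are \emph{not} arranged so that the cycle traverses the convex hull in order, so the straight-line drawing of that $4$-cycle is self-intersecting, contradicting the hypothesis on $G$. Therefore the edge-ordered graph $(G,<_G)$ avoids $C_4^{1243}$, and Theorem~\ref{thm:ordered C4} gives $e(G) \le \ex_<(n, C_4^{1243}) = O(n^{3/2})$.

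The main obstacle — and the only place the argument needs genuine care rather than invocation — is pinning down the precise geometric-combinatorial correspondence in the previous paragraph: namely, that the edge-ordering induced by a sweep has the property that a forbidden $4$-cycle pattern $C_4^{1243}$ is geometrically realizable only by a self-intersecting $4$-cycle. This amounts to showing that for four points in convex position with the "good" diagonals, no assignment of a monotone sweep coordinate to the four edges produces the order type $ab<bc<da<cd$; one checks the few possible cyclic arrangements of the four edge-midpoints (or endpoints) along the sweep line and confirms that each consistent one corresponds to either the pattern $C_4^{1234}$ or $C_4^{1324}$ — the two orderings of $C_4$ with order chromatic number $\infty$ — rather than $C_4^{1243}$. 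Once this finite check is done, the bound $e(G)=O(n^{3/2})$ follows immediately from the already-proved Theorem~\ref{thm:ordered C4}, which is exactly the short, self-contained argument promised in the text.
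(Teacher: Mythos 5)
Your high-level strategy is the same as the paper's: pick a geometric edge-ordering of $G$ under which every copy of $C_4^{1243}$ would have to be self-intersecting, and then invoke Theorem~\ref{thm:ordered C4}. However, the specific ordering you propose does not have the property you need, and this is a genuine gap rather than a routine ``finite check.'' A copy of $C_4^{1243}$ is exactly a four-cycle whose smallest and largest edges are \emph{opposite} (non-adjacent). Now take the axis-aligned unit square with vertices $(0,0),(1,0),(1,1),(0,1)$ and the four boundary edges: ordering edges by the $x$-coordinate of their midpoints (or lexicographically by endpoints in the sweep direction) makes the left edge the minimum and the right edge the maximum, and these are opposite edges of the cycle. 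So this convex, non-self-intersecting four-cycle \emph{is} a copy of $C_4^{1243}$ under your ordering, and the claimed case analysis cannot succeed. A second, smaller error: it is not true that a non-self-intersecting geometric four-cycle must have its vertices in convex position with the non-edges as diagonals --- if $d$ lies inside the triangle $abc$, the ``dart'' quadrilateral $abcd$ is simple --- so even the geometric classification underlying your case check is incomplete.

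The paper's proof repairs exactly this point with two ingredients you are missing. First, it randomly bipartitions $V(G)$ into $L$ and $R$ and keeps only edges $\ell r$ with $\ell\in L$, $r\in R$ and $\ell$ to the left of $r$ (losing only a constant factor in expectation), so that every surviving four-cycle alternates between the two classes and all four edges are consistently oriented left-to-right. Second, it orders the edges by \emph{slope}, not by sweep position. With both ingredients in place, one can check that in any non-self-intersecting four-cycle of the resulting bipartite graph the edges of smallest and largest slope are adjacent, which is what rules out $C_4^{1243}$; note that on the square example the slope order indeed puts the extremes on adjacent edges, unlike your midpoint order. To salvage your write-up you would need to replace the sweep ordering by the slope ordering, add the bipartite orientation step (slope alone, without it, is not obviously sufficient), and actually carry out the adjacency check for both convex and dart-shaped simple quadrilaterals.
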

\begin{proof}
    By taking a rotation if necessary, we may assume, without loss of generality, that our geometric graph is embedded into $\mathbb{R}^2$ such that no edge is vertical. Independently, and uniformly at random, partition the vertices of $G$ into two classes: $L$ (`left vertices') and $R$ (`right vertices'). Let $G'$ be the (bipartite) subgraph of $G$ obtained by keeping only the edges which are of the form $\ell r$ with $\ell\in L, r\in R$ and $\ell$ having smaller first coordinate than $r$. Note that $\mathbb{E}(e(G'))=\frac{1}{4}e(G)$, so it suffices to show that we always have $e(G')=O(n^{3/2})$.

    We turn $G'$ into an edge-ordered graph by ordering its edges according to slope (and ordering edges with identical slope arbitrarily). That is, edges with larger slope are larger in the edge-order. One can check that in any non-self-intersecting four-cycle in $G'$, the edges of smallest and largest slope must be adjacent. Hence, $G'$ cannot contain a copy of the edge-ordered four-cycle $C_4^{1243}$. The result then follows from Theorem~\ref{thm:ordered C4}.
\end{proof}

\subsection{Linear orders with restricted intersections}

Our Theorem \ref{theorem_linear orders} naturally leads to the following question.

\begin{question}\label{qn:linear orders}
    Is it true that for every positive integer $k$ there exists $C=C(k)$ such that for any positive integer~$n$, if $A^1,\dots,A^n$ are linear orders on some subsets of a set of $n$ symbols such that no $k$ symbols appear in the same order in any two distinct linear orders $A^i$ and $A^j$, then $\sum_{i=1}^n |A^i|\leq Cn^{3/2}$?
\end{question}

Given the many applications of Theorem \ref{theorem_linear orders}, it is very likely that an affirmative answer to the above question would have several further important consequences.

\bigskip

\noindent \textbf{Acknowledgements.} We thank David Conlon, António Girão, Gaurav Kucheriya and Van Magnan for interesting discussions related to the topic of this paper.

\end{document}